\newtheorem{Theorem}{Theorem}[section]
\newtheorem{Lemma}[Theorem]{Lemma}
\newtheorem{Corollary}[Theorem]{Corollary}
\newtheorem{Claim}{Claim}
\theoremstyle{remark}
\newtheorem{Remark}[Theorem]{Remark}
\newtheorem{Example}[Theorem]{Example}
\newtheorem{Definition}[Theorem]{Definition}
\newcommand{\CC}{{\mathbb C}}
\newcommand{\RR}{{\mathbb R}}
\newcommand{\TT}{{\mathbb T}}
\newcommand{\ZZ}{{\mathbb Z}}
\newcommand{\calA}{{\mathcal A}}
\newcommand{\calE}{{\mathcal E}}
\newcommand{\N}{{\mathcal N}}
\newcommand{\calV}{{\mathcal V}}
\newcommand{\A}{{\mathcal A}}
\newcommand{\Lc}{{\mathcal L}}
\newcommand{\calN}{{\mathcal N}}
\newcommand{\newt}[1]{\calN(#1)}
\newcommand{\defcolor}[1]{{\color{blue}#1}}
\newcommand{\demph}[1]{\defcolor{{\sl #1}}}
\definecolor{TAMU}{RGB}{140,0,0}
\definecolor{myblue}{RGB}{0,0,198}
\definecolor{myred}{RGB}{182,0,0}
\title{Rare Flat Bands for Periodic Graph Operators}
\author{Matthew Faust}
\address{Matthew Faust, Department of Mathematics, Texas A\&M University, College Station, TX 77843-3368, USA} 
\address{Current address: Department of Mathematics, Michigan State University, East Lansing, MI 48824, USA} \email{mfaust@msu.edu}
\urladdr{https://mattfaust.github.io/}
\author{Wencai Liu}
\address{Wencai Liu, Department of Mathematics,
         Texas A\&M University, College Station, Texas 77843,  USA}
\email{wencail@tamu.edu}
\urladdr{https://sites.google.com/view/wencail/home}
\subjclass[2020]{14M25, 47A75, 81Q10.}
\keywords{Dispersion Relation,  Periodic Graph, Periodic Graph Operators, Flat Bands. }
\begin{document}

\begin{abstract}
As a corollary of our main results, we prove that for any connected $\mathbb{Z}^d$-periodic graph, when edge weights and potentials are treated as variables, the corresponding periodic graph operators \emph{generically} (i.e., outside a proper algebraic subset of the variable space) do not have flat bands.
\end{abstract}

\maketitle 

\section{Introduction  and main result}

Flat bands are special spectral features of periodic operators, giving rise to eigenvalues with infinite multiplicity. This leads to a high density of states, a key factor in condensed matter physics. 
A prominent example is twisted bilayer graphene (TBG), where mathematical models have revealed the presence of flat bands with non-trivial topology \cite{becker2020mathematics,tarnopolsky2019origin,becker2022fine,becker2023degenerate,WL}. Similar phenomena have been rigorously established in twisted trilayer and multilayer graphene Hamiltonians, where distinct topological properties were identified \cite{becker2023chiral,yang2023flat}.

In this paper, we focus on $\mathbb{Z}^d$-periodic  graph operators, such as adjacency operators and discrete Schr\"odinger operators, and investigate conditions under which flat bands can (or cannot) occur. Flat bands of  periodic graph operators have attracted significant interest in the scientific community, e.g.,  see \cite{KFSH} and references therein for further discussion.

Periodic graph operators play a central role in both mathematics and physics. On the physical    side, they serve as  canonical models for crystalline networks (e.g. \cite{TopCry}), random walks in periodic environments, and quantum dynamics on lattices. By mimicking continuous periodic elliptic operators in a discrete setting, they provide a natural framework for studying Bloch waves, band structures, and gap phenomena in the tight-binding approximation. On the mathematical side, they exemplify the deep interplay among  group actions by $\mathbb{Z}^d$,  spectral theory, algebraic geometry, and combinatorics, enabling powerful techniques such as Floquet--Bloch analysis and commutative algebra.

The spectral theory of periodic graph operators has continued to be an active area of research. Recently, a wide range of topics has been explored, including   irreducibility results~\cite{lslmp20,shjst20, flm22, liu1, flm23, fg25}, extrema in the dispersion relation~\cite{FS,fk18,fk2,dksjmp20,bccm,bz},  isospectrality~\cite{liu2d,liujde,flmrp,liuborg}, quantum ergodicity~\cite{ms22, liu2022bloch}, quantum dynamics~\cite{adfs,dly,AScmp23}, the Bethe–Sommerfeld conjecture~\cite{ef,hj18,fh}, and embedded eigenvalues~\cite{liu1,shi1,kv06cmp,kvcpde20,lmt}. For a broader overview of these developments, see~\cite{ksurvey,kuchment2023analytic,liujmp22, shipmansottile,DF2}.

Despite the breadth of work on spectral theory for $\mathbb{Z}^d$-periodic operators, our understanding of flat bands  remains largely unknown. Certain special graphs---such as the Lieb, Kagome, and Dice lattices---are known to host flat bands under specific symmetry or hopping arrangements.  Other lattice geometries (e.g., Archimedean tilings~\cite{ktw,pt}) also admit robust  flat-band behaviors.  In~\cite{dksjmp20}, Do–Kuchment–Sottile established the non-degeneracy of critical points for a class of $\mathbb{Z}^2$-periodic graphs of two-atom fundamental domains with generic edge weights, a statement even stronger than the absence of flat bands. However, for general $\mathbb{Z}^d$-periodic graphs, systematic results are  limited.

Several interesting advances have nonetheless been made.
In \cite{sabri2023flat}, it was shown that for a connected periodic graph with positive weights and real potentials, the last band in the dispersion relation cannot be flat (see \cite{SK} for the case where all weights are 1). Furthermore, the authors demonstrated the absence of flat bands for fixed edge weights and generic potentials when the fundamental domain contains at most two vertices. Regarding the complete absence of flat bands, \cite{Higuchi} proved that the discrete Laplacian on a maximal abelian cover of a finite graph  $M$, where  $M$  has a 2-factor,  does not admit any flat bands.  Moreover, one could establish the nonexistence of flat bands for a broader class of periodic graph operators beyond those in \cite{flm22,fg25,liu1}, where the authors proved the irreducibility of  Bloch varieties—a stronger result than the absence of flat bands.
Nonetheless, these criteria remain far from exhaustive.

In this paper, as a corollary of our main results, we establish the following theorem:
\begin{Theorem}~\label{Thm:intro}
Assume that the underlying $\ZZ^d$-periodic graph is connected. When edge weights and potentials are treated as variables, we show that  generically (meaning outside a proper algebraic subset of the variable space), the corresponding periodic graph operators have no flat bands.
\end{Theorem}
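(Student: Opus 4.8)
The plan is to work with the Bloch (Floquet) variety and translate the existence of a flat band into an algebraic condition on the parameters. Recall that for a $\ZZ^d$-periodic graph operator $L$ with parameters (edge weights $w$, potentials $q$), choosing a fundamental domain with $V$ vertices, the Floquet transform gives a matrix pencil $L(z,\lambda) = H(w,q;z) - \lambda I$, where $z = (z_1,\dots,z_d) \in (\CC^*)^d$ are the Floquet multipliers and $H$ is a Laurent polynomial matrix that is Hermitian when $|z_i|=1$ and the parameters are real. The dispersion/Bloch variety is the zero set of the Laurent polynomial $D(w,q;z,\lambda) := \det\bigl(H(w,q;z) - \lambda I\bigr)$, and $L$ has a flat band if and only if $D$, as a polynomial in $\lambda$ (with coefficients in $\CC[z^{\pm 1}]$), has a factor of the form $(\lambda - c)$ for a \emph{constant} $c$, equivalently if $D(w,q;z,\lambda)$ is divisible by $\lambda - c$ in $\CC[z^{\pm1},\lambda]$ for some $c \in \CC$ independent of $z$.

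Next I would set up the genericity argument. Fix the combinatorial graph; the parameter space is $\calV = \CC^{E} \times \CC^{V}$ (edge-weight variables and potential variables), an irreducible affine variety. Consider the total characteristic polynomial $D(w,q;z,\lambda) \in \CC[w,q][z^{\pm1},\lambda]$; write it as $D = \sum_{k=0}^{V} a_k(w,q;z)\,\lambda^{V-k}$ with $a_0 = (-1)^V$. A flat band at height $c$ forces, for each monomial $z^\alpha$ appearing, a corresponding vanishing of the coefficient of that monomial in $D(w,q;z,c)$; more robustly, flat-band existence is equivalent to the resultant-type condition that $D(w,q;z,\lambda)$ and $\partial$-free factorization detects a $z$-independent root. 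The clean way: $L$ has a flat band $\iff$ there is $c$ with $D(w,q;z,c) \equiv 0$ as a polynomial in $z$. For \emph{fixed} $(w,q)$ this is a finite condition (only finitely many candidate $c$, namely the roots of $D(w,q;z_0,\lambda)$ at any one generic $z_0$), so the locus $\calB \subseteq \calV$ of parameters admitting a flat band is a constructible set, indeed an algebraic subset: $\calB = \bigcup_{\text{monomials }z^\alpha} \{(w,q) : \text{Res}_\lambda(D, \ldots) \text{ conditions}\}$ — concretely $\calB$ is the image under a projection of the algebraic set $\{(w,q,c) : D(w,q;z,c)\equiv_z 0\}$, and one checks it is closed. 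The whole game then reduces to exhibiting \emph{one} point $(w_0,q_0) \in \calV \setminus \calB$: since $\calV$ is irreducible, a proper algebraic subset cannot be all of $\calV$, so $\calB \ne \calV$ would follow from $\calB$ being proper, and $\calB$ proper follows from a single witness with no flat band.

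To produce the witness I would invoke connectedness of the graph. The natural candidate parameters: all edge weights set to $1$ (or to generic nonzero values) and potentials chosen generically — and then appeal to the results cited in the excerpt. Indeed, the excerpt notes that irreducibility of the Bloch variety (established in \cite{flm22,fg25,liu1} for broad classes, and for the maximal abelian cover situation in \cite{Higuchi}) implies absence of flat bands, and \cite{sabri2023flat} gives absence of flat bands for the last band plus absence of flat bands for generic potentials when the fundamental domain has $\le 2$ vertices. But for the fully general connected $\ZZ^d$-periodic graph I expect the paper's "main results" (the ones Theorem~\ref{Thm:intro} is stated as a corollary of) to be precisely what supplies the witness or, more likely, to directly prove that $\calB$ is a \emph{proper} algebraic subset by a dimension count on the Bloch variety — e.g.\ showing $\partial_\lambda D$ and $D$ cannot share a $z$-independent common factor generically, or showing the map $(w,q,z)\mapsto$ (eigenvalues) has maximal rank somewhere. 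So in the final writeup I would say: by the main theorem(s) of this paper, the Bloch variety is irreducible (or: has no flat band component) for parameters outside a proper algebraic subset $\Sigma \subsetneq \calV$; an irreducible Bloch variety manifestly has no flat band since a flat band would be a distinguished hyperplane-type component $\{\lambda = c\}$; combined with the observation that $\calV$ is irreducible, this yields the claim, taking the proper algebraic subset in the statement to be $\Sigma$ (together with the automatically-closed locus $\calB$).

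The main obstacle, and where essentially all the real work sits, is the step I am black-boxing as "the main results": proving that for a completely general connected $\ZZ^d$-periodic graph the flat-band locus $\calB$ is genuinely \emph{proper} in $\calV$. Connectedness is clearly necessary (a disconnected graph can have a flat band from an isolated finite piece, robustly in the parameters), and the challenge is to show it is sufficient in full generality — previous work handled only small fundamental domains, special covers, or graphs where irreducibility of the Bloch variety was accessible. I expect the argument to hinge on a careful choice of a spanning structure in the graph (a spanning tree, or a Hamiltonian-type path through the fundamental domain and its translates) allowing one to realize, for a cleverly chosen one-parameter or few-parameter family of weights, a Bloch matrix $H(w_0,q_0;z)$ whose characteristic polynomial has $z$-dependence in every root — the combinatorial heart being to trace how a single cycle winding around some $\ZZ^d$-direction injects a genuine $z_i$-dependence into the spectrum that cannot be a constant shift. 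That genericity-via-one-witness reduction is routine; constructing the witness for arbitrary connected periodic graphs is the crux.
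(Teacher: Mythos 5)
Your setup is sound: translating a flat band into divisibility of $D(z,\lambda)$ by $\lambda - c$ for a constant $c$, observing that the flat-band locus $\calB$ is algebraic in the parameter space, and reducing the whole problem to showing $\calB$ is a proper subset of the (irreducible) parameter space. But you explicitly black-box the one step that constitutes the entire content of the paper: proving that $\calB$ is proper. A proof proposal that says ``by the main theorem(s) of this paper'' for the crux is not a proof, and the speculative sketch you offer in the final paragraph (a spanning-tree or Hamiltonian-path witness construction) does not match what the paper actually does and is not developed enough to be checkable.

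Concretely, the paper never exhibits a single witness labeling. Instead it proves a sharper structural statement (Theorem~\ref{Thm:Main}): treating $(v,e)$ as indeterminates, $D(z,\lambda)$ has a linear factor in $\lambda$ generically \emph{if and only if} some fundamental domain of $\Gamma$ has a nonempty component of support~$0$; for a connected graph with a fundamental domain of $\geq 2$ vertices this cannot happen, giving Theorem~\ref{Thm:intro}. The proof is an induction on the size of the fundamental domain, and the machinery that makes it run is entirely absent from your proposal: (i) control of the \emph{generic Newton polytope} $N(D)$ and a non-cancellation result showing $A(\sigma D) \subseteq A(D)$ for all permutations $\sigma$ (Theorem~\ref{Lem:Inner2}); (ii) a characterization of when $N(D)$ degenerates to a vertical segment (Theorem~\ref{Lem:GenPolytope}); (iii) a combinatorial lemma, proved via Hall's marriage theorem on an auxiliary $n$-regular bipartite multigraph, showing every proper vertical facial polynomial $D_w$ must be independent of some potential variable $v_i$ (Theorem~\ref{Lem:PotIndep}); (iv) a descent step showing a flat band forces a vertical proper face and is inherited by the graph with one vertex orbit deleted (Theorem~\ref{Thm:2}); and (v) a resultant computation for periodic graphs whose simplified quotient has only cut edges (Theorem~\ref{Lem:CutGraph}), which closes the induction. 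None of these ideas appear in your write-up, so while the scaffolding is right, the proof is essentially empty where it matters.
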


For a more precise statement of Theorem \ref{Thm:intro}, refer to Theorem \ref{mainthm2}. Theorem \ref{Thm:intro} is an immediate consequence of our main result, Theorem \ref{Thm:Main}.

Theorem 1.1 addresses a longstanding question in the field. In particular, it may be viewed as a  discrete analog  of the following conjecture (see \cite[Conjecture~5.18]{ksurvey}), which states that a periodic elliptic second-order operator $H$ with sufficiently smooth coefficients cannot exhibit flat bands:

\begin{quote}
\cite[Conjecture 5.18]{ksurvey}.
\quad
\textit{A periodic elliptic second-order operator $H$ with sufficiently smooth coefficients does not admit any flat bands.}
\end{quote}

For the continuous Schr\"odinger operator $H = \Delta + V$, it is well known that no flat bands occur \cite{T73}, yet Conjecture~5.18 remains largely open for general periodic elliptic second-order operators\cite{ksurvey}. We refer readers to \cite{ksurvey} for further background and development on flat-bands of  such operators.

In the discrete setting, especially for discrete periodic Schr\"odinger operators with unit edge weights, certain families of graphs are already known to exhibit no flat bands; in fact, an even stronger statement holds in those cases, namely that their Bloch varieties are irreducible \cite{flm22,fg25,liu1}.

By proving that flat bands are absent under generic assumptions on the discrete graph and its weights, Theorem~1.1 effectively settles the discrete analogue of \cite[Conjecture~5.18]{ksurvey}, showing that flat bands are indeed rare in such periodic systems.

We would like to comment  that the generic assumption (i.e., a dense open set in the Zariski topology) is both natural and, in many cases, essential for studying the spectral theory of periodic graph operators. As mentioned earlier, certain well-known examples, such as the Lieb and Kagome lattices, can exhibit flat bands when appropriate potentials and edge weights are chosen.

The proof of Theorem~1.1 draws on techniques from combinatorics, algebraic geometry, and spectral analysis.  The starting point is that  a periodic graph operator admits a flat band if and only if the associated polynomial $ D(z,\lambda)$—which depends on the edge weights and potentials—has a linear factor in $\lambda$. The main steps of the proof are as follows:

\begin{itemize}
\item[Step 1] We establish some essential properties of the Newton polytope  $N(D(z,\lambda))$, which play a key role in the subsequent proofs. See Section~\ref{Sec:NoCancel}.
    \item[Step 2] We first handle the special case where the Newton polytope $N(D(z,\lambda))$ is a vertical line segment.  
See Section~\ref{Sec:Vertical}. 
    \item [Step 3] Using combinatorial arguments, we show that every proper vertical facial polynomial of the Newton polytope $N(D(z,\lambda))$ must be independent of at least one potential variable.  See Section~\ref{Sec:Comb}.
    
    \item [Step 4] 
    We show that if the periodic graph operator has a flat band, then some proper face of the Newton polytope $N(D(z,\lambda))$ necessarily contains a vertical line segment.  Together with Step 3, we demonstrate that after deleting the orbit of a certain vertex, the resulting periodic graph operator shares the same flat band as the original. See Section~\ref{Sec:LinearFactors}.

    \item [Step 5] Finally, we complete the proof by induction using a resultant-based argument. See Section~\ref{Sec:Resultant} for the case where the   quotient graph has only cut edges and Section~\ref{Sec:Result} for the general case.

\end{itemize}
Beyond settling a longstanding problem, we hope this paper will appeal to researchers in \emph{mathematical physics}, \emph{combinatorics}  and \emph{algebraic geometry}, showcasing how ideas from these diverse fields come together to address spectral problems in periodic graph operators.

\section{Background and Main Results }~\label{Sec:MainRes}
\subsection{Periodic Graphs and Discrete Periodic Operators}
A graph $\Gamma = (\mathcal V, \mathcal E) $ is said to be  $\mathbb{Z}^d $-periodic if:

	1.	There exists a free action of  $\mathbb{Z}^d$  on  $\Gamma$, meaning no nonzero element of  $\mathbb{Z}^d $ fixes any vertex.
    
	2.	The action of $\mathbb{Z}^d $ is invariant, i.e., for every edge  $(u, v) \in \mathcal E $ and every $a \in \mathbb{Z}^d$, we have $( u+a,  v+a) \in \mathcal E $.
    
	3.	The action is cocompact, meaning that the quotient graph  $
    \Gamma/ \mathbb{Z}^d $ is a finite graph.

Given a collection of vertices $U \subset \calV(\Gamma)$ and a vector $a \in \ZZ^d$, we write $U+a$ for the collection of vertices $\cup_{u \in U} u + a$. For any $\ZZ^d$-periodic graph $\Gamma$, there exists a finite set $W \subset \calV(\Gamma)$ such that $\cup_{a \in \ZZ^d} W+a = \calV(\Gamma)$ and $|W| = |\calV(\Gamma)/\ZZ^d|$. We call such a set $W \subset \calV(\Gamma)$ a \demph{(vertex) fundamental domain}.  See Figure \ref{fig:LiebGraph} for an example. 
 Because
the action is free, each $u\in \calV(\Gamma)$ has a unique representation as $u=v+a
$ with $ v\in W$  and $a\in\ZZ^d$.

When there is no ambiguity,  we will use $n$ in place of $|W|$, and label the vertices of $W$ by the elements of $\demph{[n] := \{ 1,\dots, n\}}$.

A \demph{$\ZZ^d$-periodic edge labeling} $E : \calE(\Gamma) \to \CC$ is a periodic and symmetric function:  for any $(u,v) \in \calE(\Gamma)$, $$E((u,v))=E((v,u))$$ and for all $a \in \ZZ^d$, $$E((u,v)) = E((u+a,v+a)).$$
We will often call the value $E((u,v))$ the \demph{label} of the edge $(u,v)$. 

An edge labeling on $\Gamma$ gives rise to a \demph{labeled adjacency operator $A_E$}. $A_E$ acts on a function $f$ on $\calV(\Gamma)$ as follows, \[A_E(f)(u) = \sum_{v:(u,v) \in \calE(\Gamma)} E((u,v))f(v), u \in \calV(\Gamma).\] 

A \demph{$\ZZ^d$-periodic potential} $V : \calV(\Gamma) \to \CC$ is a periodic function on $\calV(\Gamma)$ such that $V(u) = V(u+a)$ for all $u \in \calV(\Gamma)$ and $a \in \ZZ^d$. We call a pair of functions $(V,E)$ a \demph{labeling} of $\Gamma$. 

 As the label of any self-loops of $\Gamma$ can be absorbed into the potential and the labels of multiple edges between two vertices of $\Gamma$ can be combined into a single label, from now on,  we assume that $\Gamma$ has no self-loops or multiple edges.

Given a labeling $(V,E)$ of $\Gamma$, the operator $\demph{\Lc} := V + A_E$, acts on a function $f$ on $\calV(\Gamma)$ as follows:

\[\Lc(f)(u) = V(u) f(u) + \sum_{v:(u,v) \in \calE(\Gamma)} E((u,v))f(v), u \in \calV(\Gamma).\]

We will use $\demph{\CC^{V,E}}$ to denote the space of all complex valued labelings of $\Gamma$. 

\subsection{Floquet Theory}
We aim to study the spectrum of a discrete periodic operator $\Lc$ acting on $\ell^2(\calV(\Gamma))$, the Hilbert space of square summable functions on $\calV(\Gamma)$. 
Fix a vertex fundamental domain $W$ of $\Gamma$, and let $$\TT^d := \{(u_1,\dots, u_d) \in \CC^d \mid  |u_i| = 1 \text { for each } i  \}.$$ The Floquet transform $\mathscr{F}$ of a function $f$ on $\calV(\Gamma)$ is given by: \[ f(u) \mapsto \hat{f}(z,u) = \sum_{a \in \ZZ^d} f(u+a)z^{-a}.\]

Notice that $\hat{f}(z,u+a) = z^a \hat{f}(z,u)$. One can see that $\hat{f}(z,u)$ is just the Fourier transform of $f$ restricted to the orbit $u + \ZZ^d \subseteq \calV(\Gamma)$. If $f \in \ell^2(\calV(\Gamma))$,  then by Parseval's identity,  we have
\begin{equation}\label{gmar22}
    \sum_{u \in W} \int_{\TT^d} \vert \hat{f}(z,u) \vert^2  dz=\sum_{v\in \mathcal V(\Gamma)} |f(v)|^2.
\end{equation}

Thus, we can view $\hat{f}(z,\cdot) = (\hat{f}(z,1), \hat{f}(z,2), \dots, \hat{f}(z,n))^T$ as an element of the Hilbert space $L^2(\TT^d,\CC^n)$. By \eqref{gmar22}, $\mathscr{F}$ is a unitary operator; so, $\Lc$ and $\mathscr{F} \Lc \mathscr{F}^*$ are unitarily equivalent.

Direct calculation implies that, for $u \in \calV(\Gamma)$,
\begin{equation}\label{gflo1}
    \mathscr{F} \Lc \mathscr{F}^*(\hat{f})(z,u) = V(u) \hat{f}(z,u) + \sum_{ w \in W,a\in\ZZ^d\atop{(u,w+a) \in \calE(\Gamma)}} E((u,w+a))z^a \hat{f}(z,w).
\end{equation}

For each $z \in \TT^d$, let $L(z)$ be a $|W|\times |W|$ matrix   (with rows and columns indexed by the vertices of $W$) given by the following: for $u,v \in W$,
\begin{equation}\label{gflo2}
    L(z)_{u,v} = \delta_{u,v}V(u)+\sum_{a\in\ZZ^d:(u,v+a)\in \calE(\Gamma)}E((u,v+a))z^{a},
\end{equation}
 where $\delta$ is the Kronecker delta function ($\delta_{u,v} = 1$ when $u = v$ and is $0$ otherwise). 

By \eqref{gflo1} and \eqref{gflo2}, $ \mathscr{F} \Lc \mathscr{F}^*$ is the direct integral of $L(z)$:
\begin{equation}~\label{eq:decomposition} \mathscr{F} \Lc \mathscr{F}^* = \int_{\TT^d}^{\oplus} L(z) \ dz.\end{equation}

For many of our arguments, we will treat the potential and edge labels as indeterminates. When we use elements of $[n] = W$ for vertices of the fundamental domain, we will write $\demph{v_i}$ for the indeterminate corresponding to the potential of vertex $i$ (that is, $V(i) = v_i$), and $\demph{e_{(i,j),a}}$ for the indeterminate corresponding to the label of the edge $(i,j+a)$, where $i,j \in W, a \in \ZZ^d$ (that is, $E((i,j+a)) = e_{(i,j),a}$). We will use $v$ to represent the vector of the potential indeterminates, and $e$ to represent the vector of edge label indeterminates. By the symmetry of the edge label (namely $E((u,v))=E((v,u))$), we have 
\begin{equation}\label{gfeb251}
    e_{(i,j),a} = e_{(j,i),-a}.
\end{equation}

Under above notations, we have that \eqref{gflo2} can be expressed as:
 \begin{equation}\label{eq:floquetmatrix}
    L(z)_{i,j} = \delta_{i,j}v_i + \sum_{a\in\ZZ^d}e_{(i,j),a} z^a.
\end{equation}

 We remark that each entry of $L(z)$ is a Laurent polynomial in $z$, and \eqref{gfeb251} implies
 \begin{equation}\label{gfeb252}
   L(z) = L(z^{-1})^T.
\end{equation}
 \smallskip
\begin{Example}~\label{Ex:InducedGraph}
    Let $\Gamma$ be the Lieb lattice  (Figure~\ref{fig:LiebGraph}):
    
\begin{figure}[ht]
    \centering
\includegraphics[scale=2]{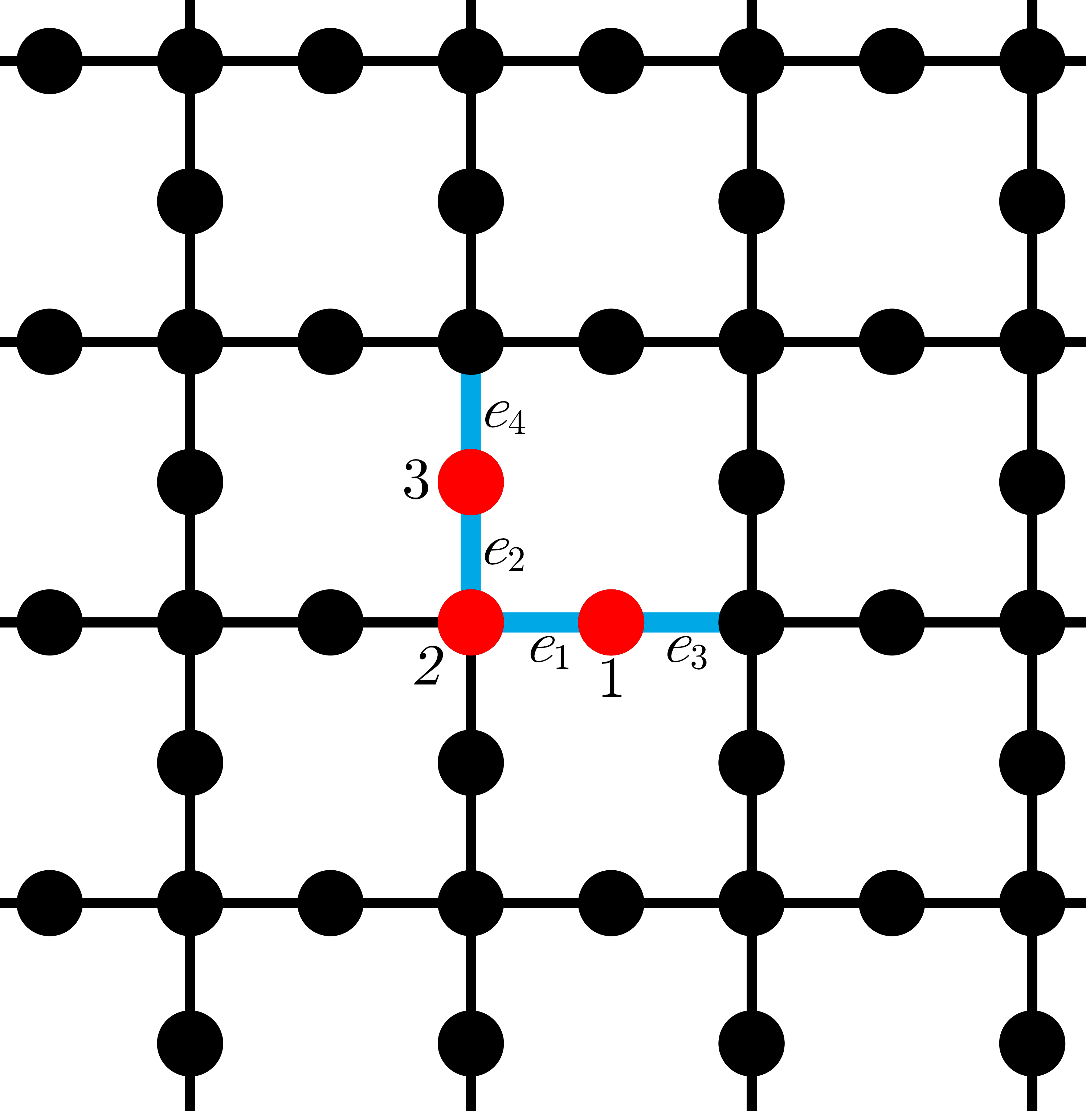}
    \caption{The $\ZZ^2$-periodic Lieb lattice.}
    \label{fig:LiebGraph}
\end{figure}
The three red dots/vertices form a fundamental domain. Label the three vertices by 1, 2, 3 as
in  Figure~\ref{fig:LiebGraph}. The Lieb lattice has four distinct edges up to periodicity (modulo $\ZZ^2$):  $$e_1:=E((1,2)) = e_{(1,2),(0,0)} , $$
$$e_2:=E((2,3)) = e_{(2,3),(0,0)} ,$$
$$e_3:=E((1, 2 + (1,0))) = e_{(1,2),(1,0)} ,$$ 
$$ \text{and } e_4:=E((3, 2 + (0,1))) = e_{(3,2),(0,1)}.$$
The potential is given by  $V(i) = v_i$ for $i = 1,2,3$.
    Under the above labeling, the Floquet matrix $L(z)$ has the following expression: 
    \[\text{\makebox[\displaywidth][c]{$L(z)=\begin{pmatrix}
        v_1 & e_1 +e_3z_1& 0 \\
        e_1 +e_3z_1^{-1}  &  v_2 & e_2+e_4z_2^{-1}  \\
        0 & e_2+e_4z_2 & v_3
    \end{pmatrix}.$ }} \text{\makebox[0pt][r]{\hspace{\displaywidth}$\diamond$}}\]
    \end{Example}
 
\subsection{Flat bands and  precise formulation of main result}

Let $\Lc$ be a discrete periodic operator associated to a $\ZZ^d$-periodic graph $\Gamma$. We call the characteristic polynomial of $L(z)$ the \demph{dispersion polynomial} and denote it by \begin{equation}~\label{eq:dispersionPoly}
    \demph{D(z,\lambda)} := \det(L(z)-\lambda I).
\end{equation} The \demph{dispersion relation} (or Bloch variety) is given by the set of zeros of $D(z,\lambda)$ in $(\CC^*)^d \times \CC$. 

Note that $L(z)$ (also $\Lc$) depends on $v$ and $e$. Sometimes, we write $L(v,e,z)$ and $D(v,e,z,\lambda)$ when we want to emphasize the dependence of $v$ and $e$.

\begin{Definition}~\label{def:flatband}
    The operator $\Lc$ has a \demph{flat band} if there exists a constant $\lambda_0$   such that 
$\lambda - \lambda_0$  is a factor of $D(z,\lambda)$.
 In this case, we say that $\Lc$ has a flat band at the energy level $\lambda_0$. \hfill $\diamond$
\end{Definition}

\begin{Remark}
    In applications of mathematical physics, the edge labels are positive and the potentials are real.
 When the potential and edge labeling are real valued, \eqref{gfeb252} implies that for $z \in \TT^d$, $L(z)$ is Hermitian and, therefore, has only real eigenvalues. 
For each $z \in \TT^d$, let $\lambda_1(z) \leq \dots \leq \lambda_j(z) \leq \dots \leq \lambda_n(z)$ be the eigenvalues of $L(z)$. We call $\lambda_j(z)$ the \demph{$j$th spectral band function} of $\Lc$.
In this case, $\Lc$ has a flat band at the energy level $\lambda_0$  
if and only if there exists   $j \in [n]$ such that $\lambda_j(z) = \lambda_0$ for all $z \in \TT^d$. \hfill $\diamond$
\end{Remark}

We are now ready to precisely state  Theorem \ref{Thm:intro}.

\begin{Theorem}\label{mainthm2}
    Suppose that $\Gamma$ is connected. There is a proper algebraic set $S$ of $\CC^{V,E}$ such that for all fixed $(v,e)\in \CC^{V,E}\backslash S$,
     $D(v,e,z,\lambda)$ has no linear factor in $\lambda$ (aka $\Lc$ has no flat bands).
    
\end{Theorem}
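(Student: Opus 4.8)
The plan is to reduce the statement to a single non‑vanishing check for a well‑chosen polynomial. Recall that, by definition, $\Lc$ has a flat band exactly when $D(v,e,z,\lambda)$ has a factor $\lambda-\lambda_0$ for some constant $\lambda_0$. Treating $v,e$ as indeterminates, $D$ is a polynomial in $\ZZ[v,e,z^{\pm 1},\lambda]$; the subset of $\CC^{V,E}$ on which the specialized $D(v,e,z,\lambda)$ acquires a linear factor in $\lambda$ is contained in a proper algebraic set provided we exhibit \emph{one} labeling $(v_0,e_0)$ for which $D(v_0,e_0,z,\lambda)$ has no linear factor in $\lambda$. Indeed, having a linear factor in $\lambda$ can be phrased algebraically (e.g.\ the $\lambda$‑resultant of $D$ and $\partial_\lambda D$, or more carefully the condition that $D$ and its ``content'' in $\lambda$ share a root, vanishes identically in $z$), so the bad set is Zariski‑closed; since it is not all of $\CC^{V,E}$, it is a proper algebraic subset $S$. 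So the whole theorem follows once we prove the \emph{generic} (equivalently, single‑point) statement that connectedness of $\Gamma$ forbids a flat band.

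The substantive content is Theorem~\ref{Thm:Main} (the ``main result'' the excerpt keeps deferring to), and the five‑step program listed in the introduction is exactly how I would attack it. First I would establish the combinatorial/geometric facts about the Newton polytope $N(D(z,\lambda))$ (Step~1), in particular a no‑cancellation phenomenon: certain monomials in the edge variables survive in $D$, so the vertical extent of $N(D)$ records genuine structure of $\Gamma$. Then I would dispose of the degenerate case where $N(D(z,\lambda))$ is a vertical segment (Step~2); here $D$ is, up to a monomial, a polynomial in $\lambda$ alone with coefficients in the $v,e$, and connectedness plus the explicit diagonal contribution $\prod_i(v_i-\lambda)$ prevents a linear factor. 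Next, the key combinatorial lemma (Step~3): every proper vertical facial polynomial of $N(D(z,\lambda))$ is independent of at least one potential variable $v_i$ — this is what lets an induction on the number of vertices even get started, because a flat band of $\Lc$ would, via Step~4, force a proper face of $N(D)$ to contain a vertical segment, and then the corresponding facial polynomial (a flat‑band ``obstruction'' for a smaller graph obtained by deleting the orbit of a vertex) inherits the flat band; deleting that vertex keeps the graph's flat‑band data intact. Finally, Step~5 runs the induction: one argues that after the vertex deletion the quotient graph is still connected (or one first reduces to the cut‑edge case, Section~\ref{Sec:Resultant}, then the general case, Section~\ref{Sec:Result}) and closes the loop with a resultant computation in one of the $z_i$ variables, eliminating that variable and reducing $d$ or $n$.

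The main obstacle, I expect, is Step~3 together with Step~4 — the precise combinatorial description of vertical facial polynomials of $N(D(z,\lambda))$ and the claim that a flat band propagates to a smaller graph. The difficulty is twofold: identifying which monomials of $D$ lie on a given face requires controlling cancellations in the determinant expansion of $L(z)-\lambda I$ (a signed sum over permutations / spanning‑structure combinatorics), and then showing that the facial polynomial is \emph{exactly} the dispersion‑type polynomial of the vertex‑deleted operator, so that its lack of a linear factor is an honest induction hypothesis. Connectedness must be used in an essential way precisely here: deleting a vertex orbit can disconnect $\Gamma$, so one must choose the vertex to delete carefully (a non‑cut vertex of the quotient, when one exists) and separately handle the case where the quotient graph is a tree of cut edges, which is why the proof splits into Sections~\ref{Sec:Resultant} and~\ref{Sec:Result}.

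Everything else is comparatively routine: the passage from ``a single good labeling exists'' to ``the bad set is a proper algebraic subset'' is a standard specialization argument, using that the coefficients of $D$ in $z,\lambda$ are polynomials in $(v,e)$ and that ``$D$ has a linear factor in $\lambda$'' is cut out by the vanishing of finitely many such polynomials (the $z$‑coefficients of an appropriate resultant), none of which vanishes identically by the generic statement.
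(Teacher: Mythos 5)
Your proposal follows essentially the same route as the paper: Theorem~\ref{mainthm2} is read off from Theorem~\ref{Thm:Main} once one observes that connectedness of $\Gamma$ rules out any fundamental domain with a nonempty component of support $0$, together with the (standard, and in the paper implicit) fact that the locus in $\CC^{V,E}$ where $D(v,e,z,\lambda)$ acquires a linear factor in $\lambda$ is Zariski-closed, so a single good labeling makes it a proper algebraic set. The one quibble is that the $\lambda$-resultant of $D$ with $\partial_\lambda D$ is the discriminant (it detects repeated roots, not linear factors); the closedness you want instead comes from properness of the projection of $\{(v,e,\lambda_0) : \text{all } z\text{-coefficients of } D \text{ vanish at } \lambda_0\}$, which is finite over $\CC^{V,E}$ since the $z^0$-coefficient of $D$ is $\pm\lambda^n+\cdots$ --- but this is exactly what your ``more carefully'' parenthetical gestures at, and your sketch of Theorem~\ref{Thm:Main} itself reproduces the paper's five-step plan.
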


\section{ Basics of  algebra and  notations}~\label{Sec:Basics}
\subsection{Some Algebra and Discrete Geometry}
Denote by $\CC[x_1^{\pm}, \dots, x_k^{\pm}]$$=\CC[x^{\pm}]$
all Laurent polynomials.
Write $f = \sum_{a \in \ZZ^k} c_a x^a$.
 We will often use the notation $\demph{[x^a]}f = c_a$ to denote the \demph{coefficient} of the term with monomial $x^a$ in $f$.  The \demph{support} of $f$, denoted $\demph{\A(f)} \subset \ZZ^k$, is the finite collection of $a \in \ZZ^k$ such that $c_a \neq 0$.

The \demph{Newton polytope of} $f$, denoted $\demph{\newt{f}} \subseteq \RR^k$, is the convex hull of $\A(f)$. In particular, \[\newt{f} := \{ b_1 a_1 + \dots + b_{|\A(f)|} a_{|\A(f)|} \mid a_i \in \calA(f),  0 \leq b_i \leq 1, \sum_{i=1}^{|\A(f)|} b_i = 1\}.\]

The \demph{Minkowski sum} of two polytopes $P$ and $Q$ is given by
\[ P+Q := \{p+q \mid p \in P, q \in Q\}.\]

If a (Laurent) polynomial $f$ factors, that is, there exist (Laurent) polynomials $g$ and $h$ such that $f = gh$, then the Newton polytope of $f$ must be the Minkowski sum of the  Newton polytopes of its factors: \[\N(f) = \N(g) + \N(h).\] 

Let $w \in \ZZ^{k}$ and let $m = \min_{a \in \A(f)} w \cdot a$, where $w \cdot a = \sum_{i=1}^{k} w_i a_i$. We often call $w \cdot a$ the \demph{weight} of the vector $a$ with respect to $w$, or the weight of the term $c_a x^a$ with respect to $w$. The \demph{face} $F = \{a \in \newt{f} \mid a \cdot w = \min_{a \in \newt{f}} a \cdot w= \min_{a \in \mathcal A(f)} a \cdot w\}$ of $\newt{f}$ identified by $w$ is the collection of points of $\newt{f}$ minimized by $w$.    Write  $f_w$ or $f_F$ for the corresponding \demph{facial polynomial}:
 \[\demph{f_w} := \sum_{a \in F \cap \A(f)} c_{a} x^a.\]

Notice that if $f$ is a polynomial and $f = pq$, then \begin{equation}~\label{eq:facial} f_w = (pq)_w = p_w q_w. \end{equation}

If $f = \sum_{j=1}^k f^j$ and $\mathcal{A}(f^j)\subseteq \mathcal{A} (f),j=1,2,\cdots,k$, then \begin{equation}~\label{eq:facialsum} f_w = (\sum_{j=1}^k f^j_w )_w. \end{equation}

\subsection{Support, Components, Induced Graphs, Induced Operators, and Simplified Quotient Graphs}
Let $\Gamma$ be a $\ZZ^d$-periodic graph with a fundamental domain $W$.

\begin{Definition}
    The \demph{support} of a collection of vertices $U \subseteq W$, denoted $\demph{\calA(U)}$, is the collection of $a \in \ZZ^d$ such that there exists an edge in $\calE(\Gamma)$ between some vertex of $U$ and some vertex of $U+a$. That is,
\[ \calA(U) := \{ a \in \ZZ^d \mid \text{ there exist }  u\text{ and } v \text{ in } U \text{ such that } (u, v + a) \in \calE(\Gamma) \}. \]

If $\calA(U) = \{ \emptyset\}$ or $\{(0,\dots,0) \}$, then we say that $U$ has \demph{support $0$}. \hfill $\diamond$
\end{Definition}

\begin{Definition} We call $U \subseteq W$ a \demph{component} of $W$ if there does not exist $u \in U$ and $v \in W \smallsetminus U$ such that there is an edge of $\calE(\Gamma)$ with end points in both $u + \ZZ^d$ and $v + \ZZ^d$. That is, $U$ is a component of $W$ if the set of edges \[ \{(u,v+a) \in \calE(\Gamma) \mid u \in U, v \in W \smallsetminus U, a \in \ZZ^d \} \] is empty. \hfill $\diamond$
\end{Definition}

\begin{Definition}
For $U \subseteq W$, the \demph{induced} $\ZZ^d$-periodic graph $\demph{\Gamma_U}$ is given by \[\calV(\Gamma_U) := \{u+a \mid u \in U, a \in \ZZ^d \} \] 
and
\[\  \calE(\Gamma_U) := \{ (u+a,v+b) \in \calE(\Gamma) \mid u,v \in U, a,b \in \ZZ^d \},\] 
and is a subgraph of $\Gamma$. \hfill $\diamond$
\end{Definition}

 Let $U \subseteq W$. Given a labeling $(V,E)$ of $\Gamma$, we can discuss restricting the labeling to $\Gamma_U$.

\begin{Definition}
    Define the \demph{induced potential} $V|_U: \calV(\Gamma_U) \to \CC$ and the \demph{induced edge labeling} $E|_U: \calE(\Gamma_U) \to \CC$ to be the natural restrictions of $V$ and $E$ to the vertices and edges of $\Gamma_U \subseteq \Gamma$, respectively. 
    
    The \demph{induced labeling} of $(V,E)$ on $\Gamma_U$ is given by $\demph{(V|_U, E|_U)}$. \hfill $\diamond$
\end{Definition}

With an induced labeling, we may define an \demph{induced   periodic operator}.

\begin{Definition}
Suppose \(\Lc\) is an operator on \(\Gamma\) with labeling \((V, E)\). Define \(\demph{\Lc|_U}\) as the \demph{induced discrete periodic operator} on \(\Gamma_U\) with labeling \((V|_U, E|_U)\). We let \(\demph{L|_U(z)}\) be its Floquet matrix and \(\demph{D|_U(z,\lambda)}:= \det(L|_U(z) - \lambda I)\) be its dispersion polynomial. \hfill \(\diamond\)
\end{Definition}

\smallskip

\begin{Lemma}~\label{fact:componentzero} Let $\Gamma$ be a $\ZZ^d$-periodic graph with a fundamental domain $W$. If $W$ has a nonempty component $U$ of support $0$, then $D(z,\lambda)$ has a linear factor in $\lambda$.
\end{Lemma}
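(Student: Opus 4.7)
The plan is to exploit the block structure of the Floquet matrix $L(z)$ forced by the hypotheses on $U$. Ordering the vertices of $W$ so that those in $U$ come first, componentness of $U$ implies that no edge of $\calE(\Gamma)$ joins a vertex in the orbit $U+\ZZ^d$ to one in the orbit $(W\setminus U)+\ZZ^d$. Inspecting equation~\eqref{eq:floquetmatrix}, this makes every off-diagonal block entry $L(z)_{i,j}$ with $i\in U$ and $j\in W\setminus U$ identically zero, so
\[
L(z) \;=\; \begin{pmatrix} L|_U(z) & 0 \\ 0 & L|_{W\setminus U}(z) \end{pmatrix},
\qquad
D(z,\lambda) \;=\; D|_U(z,\lambda)\cdot D|_{W\setminus U}(z,\lambda).
\]

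Next I would use the support-zero hypothesis to show that $D|_U(z,\lambda)$ is independent of $z$. For any $i,j\in U$ and any $a\in\ZZ^d$ with $(i,j+a)\in\calE(\Gamma)$, the shift $a$ lies in $\calA(U)$, which by assumption is either empty or $\{\bfz\}$. Combined with the standing assumption that $\Gamma$ has no self-loops, the sum in~\eqref{eq:floquetmatrix} defining $L(z)_{i,j}$ for $i,j\in U$ reduces to the single term at $a=\bfz$, contributing $E((i,j))$ when $(i,j)\in\calE(\Gamma)$ and zero otherwise. Hence $L|_U(z)$ is a constant matrix over $\CC$, with diagonal entries $V(u)$ for $u\in U$ and off-diagonal entries given by the edge labels inside a single fundamental copy of $U$; in particular its characteristic polynomial $D|_U(z,\lambda)=D|_U(\lambda)$ depends only on $\lambda$.

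Because $U$ is nonempty, $D|_U(\lambda)$ is a monic polynomial in $\lambda$ of positive degree $|U|$ with coefficients in $\CC$, so by the fundamental theorem of algebra it has a root $\lambda_0\in\CC$. Then $(\lambda-\lambda_0)$ divides $D|_U(\lambda)$ and hence divides $D(z,\lambda)$, giving the claimed linear factor; in the self-adjoint regime this $\lambda_0$ is precisely the energy at which $\Lc$ has a flat band. There is no serious obstacle here: the only delicate points are reading off the two structural consequences---componentness kills the off-diagonal blocks of $L(z)$, and support-zero kills the $z$-dependence of the $U$-block---directly from the formula~\eqref{eq:floquetmatrix}, after which the conclusion is a one-line application of the fundamental theorem of algebra.
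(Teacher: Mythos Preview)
Your proof is correct and follows essentially the same approach as the paper's: use componentness of $U$ to block-diagonalize $L(z)$, use support-zero to make $L|_U(z)$ independent of $z$, and then factor the resulting univariate polynomial $D|_U(\lambda)$ over $\CC$. The only cosmetic difference is the order in which you invoke the two hypotheses, and a minor slip in calling $D|_U(\lambda)$ monic (it is $(-1)^{|U|}$ times monic), which does not affect the argument.
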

\begin{proof}
 By definition of $U$ having support $0$, the $\ZZ^d$-periodic graph $\Gamma_U$ has a fundamental domain $U$ such that there are no edges between vertices of $U$ and $U+a$ in $\calE(\Gamma_U)$ for any $a \neq 0 \in \ZZ^d$. By \eqref{eq:floquetmatrix},  entries of $L|_U(z)$ are  independent of $z$, and thus $D|_U(z,\lambda) = \det(L|_U(z) - \lambda I)$ is a univariate polynomial in $\lambda$. Therefore, $D|_U(z,\lambda)$ has $|U|$ linear factors in $\lambda$. Furthermore, as $U$ is a component of $W$, $\Gamma = \Gamma_U \sqcup \Gamma_{W \smallsetminus U}$, and so, possibly after reordering the vertices of $W$,

 \begin{equation} L(z) =  \begin{pmatrix}
    L|_U(z) & \bf{0} \\
    \bf{0} &  L|_{W\smallsetminus U}(z)
\end{pmatrix}. \end{equation}
It follows that the linear factors of  $\det(L|_U(z) - \lambda I)$  must divide $D(z,\lambda)$.
\end{proof}

\begin{Definition}
    Let $\demph{S(\Gamma / \ZZ^d)}$ be the finite simple graph with vertices $$\calV(S(\Gamma / \ZZ^d)) := W$$ and edges $$ \calE(S(\Gamma / \ZZ^d)) := \{ (u,v)\in W\times W\mid u \neq v  \text{ and } (u,v+a) \in \calE(\Gamma) \text{ for some  } a  \in \ZZ^d \}.$$ 
    We call $S(\Gamma / \ZZ^d)$ the \demph{simplified quotient graph} of $\Gamma$.  \hfill $\diamond$
\end{Definition}

The simplified quotient graph is the simple graph counterpart of the quotient graph $\Gamma / \ZZ^d$ (see \cite{TopCry}).

\subsection{Generic Support and the Generic Newton Polytope}

\begin{Definition}
    The collection of all algebraic sets of $\CC^{V,E}$ induces a topology whose closed sets are the algebraic sets.   This induced topology is the \demph{Zariski topology}.  \hfill $\diamond$
\end{Definition}
Notably, if $Z (\neq \CC^{V,E})$ is a proper  algebraic set, then the Zariski open set $\CC^d \smallsetminus Z$ is a dense open set.
\begin{Definition}
A property is called \demph{generic} if it is true on some nonempty Zariski open subset of $\CC^{V,E}$. Any point $(V,E)$ in such a subset is a \demph{generic labeling}. \hfill $\diamond$
\end{Definition}

A polynomial $f \in \CC[z^{\pm},\lambda,v,e]$ has the following expression: 
 \begin{equation}~\label{eq:gendisppoly}
    f(z,\lambda)=  f(v,e,z,\lambda)= \sum_{(a,b)\in\ZZ^{d+1} \atop{c_{a,b}\neq 0}} c_{a,b}(v,e) z^a \lambda^b.
\end{equation} 
Let $Var(c_{a,b}(v,e))$ be the set of zeros of $c_{a,b}(v,e)$ in $\CC^{V,E}$. It follows that the set $$\CC^{V,E} \smallsetminus (\bigcup Var(c_{a,b}(v,e)))$$ is a nonempty Zariski open set in $\CC^{V,E}$. Therefore, generically (with respect to $(v,e)$),  the support of $f$ (as a function of $z$ and $\lambda$) is fixed.  This motivates the following definitions:

\begin{Definition}
 
         When $v,e$ are taken to be vectors of indeterminates, define the \demph{generic support} of a polynomial $f \in
         \CC[z^{\pm},\lambda,v,e]$ as the support of $f(z,\lambda) \in \CC[z^{\pm},\lambda]$  for a generic choice of $(v,e) \in \CC^{V,E}$. Denote by $A(f)$ 
         the \demph{generic support} of $f$. \hfill $\diamond$

\end{Definition}

\begin{Definition}
 
         The \demph{generic Newton polytope} (denoted by $N(f)$) of $f=f(z,\lambda)=f(v,e,z,\lambda)$  is the Newton polytope $\calN(f(z,\lambda))$ for a generic labeling $(v,e) \in  \CC^{V,E}$.  \hfill $\diamond$

\end{Definition}
 
It is clear that for any fixed $(v,e) \in \CC^{V,E}$, $\calN(f) \subseteq N(f) $.

Define  $\demph{N}:=N(D(z,\lambda))$ and     refer to it as the generic Newton polytope of  $\Gamma$.

\subsection{Vertical Faces and Permutations}
\begin{Definition}
    A face $F$ of a polytope $P \subset \RR^{d+1}$ is called a \demph{vertical face} if there exist two points $(a,b), (a,c) \in F$ such that $a \in \ZZ^d$ and $b, c \in \ZZ$, where $b \neq c$.
   
    A vertical face that is also an edge (that is, a $1$-dimensional polytope) is called a \demph{vertical segment}.
     \hfill  $\diamond$
\end{Definition}

For example, if $f \in \CC[z^{\pm},\lambda]$, and there is a face $F$ of $\N(f(z,\lambda)) \subset \RR^{d+1}$ such that $f_F$ has two terms $z^a\lambda^b$ and $z^a \lambda^c$ with nonzero coefficients for some integers $b\neq c$ and some $a \in \ZZ^d$, then $F$ is a vertical face.  

Notice that a dispersion polynomial $D(z,\lambda)$ must always be such that $$[\lambda^n]D(z,\lambda) = \pm 1.$$ It follows that, for a fixed labeling, the Newton polytope $\calN(D(z,\lambda)) \subset \RR^{d+1}$ is a vertical segment if and only if $D(z,\lambda)$ can be written as a univariate polynomial in $\lambda$.

Let $\demph{S_n}$ denote the collection of permutations of $[n]$. We will often express a permutation $\sigma \in S_n$ as a product of finitely many disjoint cycles $\eta_1,\dots, \eta_k$. Let \begin{equation}~\label{eq:perm}
    \demph{\sigma D(z,\lambda)} := \prod_{i=1}^n (L(z) - \lambda I)_{i,\sigma(i)}.
\end{equation}
Similarly, if $\eta$ is a cycle of a permutation $\sigma$ of length $p$, that is, $\eta = (j_1\dots j_p)$ where each $j_i \in [n]$, then we write \begin{equation}~\label{eq:cycle} \demph{\eta D(z,\lambda)} := \prod_{i=1}^p (L(z) - \lambda I)_{j_i,\eta(j_i)}.\end{equation}

The definition of determinant states that \begin{equation} \label{defdet}
D(z,\lambda) = \sum_{\sigma \in S_n} sgn(\sigma) \sigma D(z,\lambda).\end{equation}

\subsection{Summary of Notation and Reformulation of Main Theorem}

Our main goal is to prove the following theorem.

\begin{Theorem}~\label{Thm:Main} Let $\Gamma$ be a $\ZZ^d$-periodic graph. Then for a generic labeling, the dispersion polynomial $D(z,\lambda)$ has a linear factor in $\lambda$ if and only if $\Gamma$ has a fundamental domain $W$ with a nonempty component of support $0$. 
\end{Theorem}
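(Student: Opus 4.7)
The ``if'' direction of Theorem \ref{Thm:Main} is Lemma \ref{fact:componentzero}, so the plan is to establish the converse: if, for a generic labeling $(v,e) \in \CC^{V,E}$, the dispersion polynomial $D(z,\lambda)$ admits a linear factor in $\lambda$, then $\Gamma$ has a fundamental domain $W$ containing a nonempty component of support $0$. I would argue by induction on $n = |W|$. The base case $n=1$ is immediate: the Floquet matrix reduces to the single entry $v_1 - \lambda + \sum_a e_{(1,1),a}z^a$, which has a linear factor in $\lambda$ only when all $e_{(1,1),a}$ with $a \neq 0$ vanish, i.e., when $W = \{1\}$ already has support $0$.

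For the inductive step, fix a generic labeling yielding $D(z,\lambda) = (\lambda - \lambda_0(v,e))\, g(z,\lambda)$. The organizing object is the generic Newton polytope $N = N(D(z,\lambda)) \subset \RR^{d+1}$. Since the Newton polytope of a product is the Minkowski sum of the factors' polytopes, the factorization forces $N$ to admit a vertical Minkowski summand; moreover, by \eqref{eq:facial} every facial polynomial $D_F$ inherits a compatible factorization. The first reduction I would handle is the special case in which $N$ itself is a vertical segment. There $D(z,\lambda)$ is a univariate polynomial in $\lambda$ up to an overall monomial in $z$, so expanding via \eqref{defdet} and comparing coefficients shows that every permutation contributing a nontrivial monomial in $z$ must cancel against others. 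A careful bookkeeping of these cancellations, based on the support analysis of Section~\ref{Sec:NoCancel}, should force the existence of a component of support $0$ in some fundamental domain.

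Assuming now that $N$ is not a vertical segment, I would combine two structural ingredients. The first (Step 3 in the authors' outline) is a combinatorial lemma: for every proper vertical face $F$ of $N$, the facial polynomial $D_F$ is independent of at least one potential variable $v_i$. The intuition is that only certain cycle expansions in \eqref{eq:cycle} attain the extremal weight cutting out $F$, and the vertex set exercised by those cycles is a strict subset of $W$; the remaining vertex $i$ contributes neither a diagonal factor $v_i - \lambda$ nor any relevant off-diagonal entry to $D_F$. The second ingredient (Step 4) is that the presence of a flat band forces some proper face of $N$ to contain a vertical segment, since $[\lambda^n]D(z,\lambda) = \pm 1$ pins down the top-$\lambda$ face and the factor $\lambda - \lambda_0$ has to propagate through the face lattice. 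Combining these, one identifies a vertex $i$ whose deletion yields an induced operator $\Lc|_{W \smallsetminus \{i\}}$ that still possesses the same flat band $\lambda_0$.

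The final step is induction, carried out via a resultant computation relating $D(z,\lambda)$ to the dispersion polynomial of the graph obtained by removing the orbit of $i$. Comparing these resultants, as outlined in Sections~\ref{Sec:Resultant} and \ref{Sec:Result}, propagates the existence of a component of support $0$ from the smaller graph back to $\Gamma$. The main obstacle I anticipate is Step 3: proving independence of some $v_i$ on every proper vertical facial polynomial requires a delicate combinatorial analysis of how cycle decompositions of permutations in $S_n$ interact with the normal fan of $N$, and it is precisely where the graph-theoretic structure of $\Gamma$---as opposed to an arbitrary matrix of Laurent polynomials---must be brought to bear. The resultant induction of Step 5 is technically involved but largely mechanical once the correct vertex to delete has been isolated.
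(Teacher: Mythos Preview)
Your outline tracks the paper's approach and the inductive skeleton is correct, but two of your descriptions would not close as written. Your intuition for Step~3—that extremal-weight cycles exercise only a proper subset of $W$—is not the actual mechanism: the paper assumes $D_w$ depends on every $v_i$, encodes the $n$ witnessing permutations as an $n$-regular bipartite multigraph weighted by $\min_{a}\, w\cdot a$ over $A((L(z)-\lambda I)_{i,j})$, removes the diagonal perfect matching (weight $0$), and then uses Hall's marriage theorem together with pigeonhole on the remaining $(n-1)$-regular graph of total weight $nm<0$ to extract a permutation of weight strictly below $m$, contradicting Corollary~\ref{Lem:Inner}.

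More substantively, the final step is neither mechanical nor a matter of ``propagating'' a support-$0$ component from $\Gamma_U$ back to $\Gamma$. The support-$0$ component $\hat U\subset U$ produced by the inductive hypothesis is typically \emph{not} a support-$0$ component of $\Gamma$, since the deleted vertex $1$ may be adjacent to it; the paper argues by contradiction. One partitions $U=\hat U\cup(U\smallsetminus\hat U)$, uses the induction hypothesis again to see that $D|_{U\smallsetminus\hat U}$ has no linear factor, expands $D(z,\lambda)$ via the resulting block structure as $(L(z)_{1,1}-\lambda)D|_{\hat U}D|_{U\smallsetminus\hat U}+D|_{U\smallsetminus\hat U}\,P+D|_{\hat U}\,Q$, and deduces that the shared linear factor forces $Res\bigl(P(z,\lambda),\,D|_{\hat U}(\lambda)\bigr)\equiv 0$. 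The contradiction then comes by specializing enough edge labels to $0$ so that the simplified quotient of $\Gamma_{\{1\}\cup\hat U}$ becomes a tree (all cut edges), whereupon Theorem~\ref{Lem:CutGraph}—itself a separate, nontrivial induction over such trees—shows this resultant is generically nonzero.
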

\begin{Remark}
   By Lemma~\ref{fact:componentzero}, Theorem~\ref{Thm:intro} (aka Theorem~\ref{mainthm2}) follows directly from Theorem~\ref{Thm:Main}.
\end{Remark}
We present all the notation and conventions here that will be repeatedly used throughout the proofs.

\begin{enumerate}
    \item $\A(f)$ the support of $f$. $A(f)$ the generic support of the polynomial $f$.
    \item $f_w$ the facial polynomial identified by a vector $w$. 
    \item $\N(f)$ the Newton polytope of $f$.  $N(f)$ the generic Newton polytope of the polynomial $f$. 
    \item $\Gamma = (\calV(\Gamma),\calE(\Gamma))$ the $\ZZ^d$-periodic graph with vertices $\calV(\Gamma)$ and edges $\calE(\Gamma)$.
    \item $W := [n]$ a fundamental domain of $\Gamma$.
    \item $S(\Gamma / \ZZ^d)$ the simplified quotient graph of $\Gamma$.
    \item $\A(U)$   the support of $U \subset \calV(\Gamma)$.
    \item $(V,E)$ the labeling of some $\ZZ^d$-periodic graph $\Gamma$.
    \item $v,e$ denote when $V$ and $E$ are taken to be vectors of indeterminates.
    \item $\Lc$  the  periodic graph operator.
    \item $L(z)=L(v,e,z)$ the Floquet matrix.
    \item $D(z,\lambda)=D(v,e,z,\lambda)$ the dispersion polynomial. 
    \item $N=N(D(z,\lambda))$ the generic Newton polytope
    of $\Gamma$.
    \item In the following proofs, unless otherwise specified, we always assume a generic labeling.
\end{enumerate}
\medskip

\section{Newton polytopes}~\label{Sec:NoCancel}

\begin{Theorem}~\label{Lem:Inner2}
For all $\sigma \in S_n$,
$A(\sigma D(z,\lambda)) \subseteq A(D(z,\lambda))  \subset \RR^{d+1}$.
\end{Theorem}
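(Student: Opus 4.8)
\medskip
\noindent\textbf{Proof plan.}
The plan is to establish a strictly stronger, cancellation-free statement. Expanding the determinant via \eqref{defdet}, $D(z,\lambda)=\sum_{\tau\in S_n}\operatorname{sgn}(\tau)\,\tau D(z,\lambda)$, I claim that every monomial of the form $z^a\lambda^b\,m$ (with $m$ a monomial in the potential and edge-label variables $v,e$) that occurs with nonzero coefficient in some $\sigma D(z,\lambda)$ also occurs with nonzero coefficient in $D(z,\lambda)$. Since by definition the generic support $A(f)\subset\ZZ^{d+1}\subset\RR^{d+1}$ of a polynomial $f\in\CC[z^{\pm},\lambda,v,e]$ is precisely the set of $(a,b)$ for which some such monomial $z^a\lambda^b m$ appears in $f$, the desired inclusion $A(\sigma D)\subseteq A(D)$ follows immediately from this claim.

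First I would record the local structure of a single permutation term $\sigma D=\prod_{i=1}^{n}(L(z)-\lambda I)_{i,\sigma(i)}$. By \eqref{eq:floquetmatrix} together with the convention that $\Gamma$ has no self-loops (so $e_{(i,i),0}=0$), a diagonal entry $(L(z)-\lambda I)_{i,i}$ is a sum of the pairwise distinct monomials $v_i$, $-\lambda$, and $e_{(i,i),a}z^a$ for $a\neq 0$, while an off-diagonal entry $(L(z)-\lambda I)_{i,j}$ with $i\neq j$ is a sum of the distinct monomials $e_{(i,j),a}z^a$. Expanding $\sigma D$, each choice of one monomial from each of the $n$ factors therefore produces a distinct monomial, whose coefficient is $(-1)^b$ where $b$ is the number of fixed points of $\sigma$ at which the term $-\lambda$ was picked; note that this $b$ equals the $\lambda$-degree of the monomial, and that there is no internal cancellation inside $\sigma D$.

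The core of the argument is to prove that if a single monomial $\mu=z^a\lambda^b m$ appears in both $\sigma D$ and $\tau D$, then $\operatorname{sgn}(\sigma)=\operatorname{sgn}(\tau)$. I would recover all of the relevant structure of $\sigma$ from $\mu$: the vertices incident to the non-loop edge-variables appearing in $m$ are exactly the non-fixed points of $\sigma$ (injectivity of $\sigma$ forces both endpoints of such an edge-variable to move); at each fixed point $i$ the choice actually made ($v_i$, a loop $e_{(i,i),a}$, or $-\lambda$) is read off $\mu$; and, using the label symmetry $e_{(i,j),a}=e_{(j,i),-a}$ from \eqref{gfeb251}, the non-loop edge-variables of $m$ determine a $2$-regular multigraph on the non-fixed points whose connected components must coincide with the (undirected) cycles of $\sigma$. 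All this data is a function of $\mu$ alone, so $\tau$ has the same non-fixed points, makes the same choices at the fixed points, and has the same cycle vertex-sets; on each cycle $\tau$ is then either $\sigma$ or its reversal, and since reversing a $p$-cycle gives another $p$-cycle we conclude $\operatorname{sgn}(\tau)=\operatorname{sgn}(\sigma)$. The one subtlety is that by \eqref{gfeb252} reversing a cycle negates the corresponding $z$-exponents, so a reversal need not preserve $\mu$; but this can only shrink the set of competing $\tau$'s, hence is harmless. Putting these together, the coefficient of $\mu$ in $D$ equals $(-1)^b\operatorname{sgn}(\sigma)\cdot\#\{\tau:\mu\text{ occurs in }\tau D\}\neq 0$, which proves the claim.

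I expect the main obstacle to be the combinatorial bookkeeping in the core step --- specifically, verifying that the edge-variables recorded in $\mu$ reconstruct the cycle partition of $\sigma$ up to orientation in the presence of the label symmetry \eqref{gfeb251}, of several parallel $z$-shifts between a fixed pair of vertices, and of loop edges at fixed points. The remaining pieces (the local description of the matrix entries, the absence of internal cancellation within one $\sigma D$, and the translation back from monomials to the $(z,\lambda)$-support) are routine.
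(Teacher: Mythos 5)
Your proposal is correct and follows essentially the same strategy as the paper: work at the level of the full monomial in $(z,\lambda,v,e)$, observe that a shared monomial forces $\tau$ to agree with $\sigma$ up to reversing cycles (hence $\operatorname{sgn}(\tau)=\operatorname{sgn}(\sigma)$ and the coefficient $(-1)^b$ is the same), and conclude that the contributions in the determinant expansion cannot cancel. The paper packages this as Lemmas~\ref{Lem:InnerHelper2} and~\ref{Lem:InnerHelper} rather than via your explicit $2$-regular multigraph bookkeeping, but the mathematical content is identical.
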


To prove Theorem~\ref{Lem:Inner2}, we need the following two lemmas. In these lemmas, we will consider the support of $\sigma D(v,e,z,\lambda)$ as a polynomial in $\CC[z^{\pm},\lambda, v ,e]$, and completely describe the collection of $\tau \in S_n$ such that $\calA(\sigma D(v,e,z,\lambda)) \cap \calA(\tau D(v,e,z,\lambda)) \subset \ZZ^{d+1} \times \ZZ^{V,E}$ may be nonempty. 
\medskip

\begin{Lemma}~\label{Lem:InnerHelper2}
Let $\Gamma$ be a $\ZZ^d$-periodic graph with a (vertex) fundamental domain $W = [n]$. Assume that  there exists  $r=(r_1,r_2,r_3,r_4)$   with $r_1\in\ZZ^d$, $r_2\in\ZZ$, and $(r_3,r_4)\in \ZZ^{V,E}$ such that $r\in \calA(\sigma D(v,e,z,\lambda)) \cap \calA(\tau D(v,e,z,\lambda)) $. Then the following statements hold
\begin{itemize}
    \item[1.] 
    \begin{equation}\label{gfeb265}
    [z^{r_1}\lambda^{r_2} v^{r_3} e^{r_4}] \tau D(v,e,z,\lambda)= [z^{r_1}\lambda^{r_2} v^{r_3} e^{r_4}]\sigma D(v,e,z,\lambda).
\end{equation}
    \item[2.] Write the permutation  $\sigma \in S_n$  as the   product of disjoint cycles $\sigma = \eta_1,\dots, \eta_k$. Then 
 for each $i \in [k]$, either $\eta_i$ or $(\eta_i)^{-1}$ is a cycle of $\tau$.
\end{itemize}
\end{Lemma}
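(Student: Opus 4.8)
`\textbf{Proof proposal for Lemma~\ref{Lem:InnerHelper2}.}`

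\textbf{Proof proposal for Lemma~\ref{Lem:InnerHelper2}.} The plan is to read both statements off the cycle structure of $\sigma$ together with the identity $\eta^{-1}D(z,\lambda)=\eta D(z^{-1},\lambda)$ for a single cycle $\eta$, which follows by reindexing the product in \eqref{eq:cycle} using $L(z)=L(z^{-1})^T$ (recall \eqref{gfeb252} and \eqref{gfeb251}). First I would write $\sigma D=\prod_{\eta}\eta D$ over the disjoint cycles of $\sigma$, noting that a fixed point $(i)$ contributes the diagonal entry $v_i-\lambda+\sum_{a\neq 0}e_{(i,i),a}z^{a}$ (the only factors carrying $\lambda$ or a potential variable), while a cycle of length $\ge 2$ contributes a product of off-diagonal entries $L(z)_{j_l,j_{l+1}}=\sum_a e_{(j_l,j_{l+1}),a}z^{a}$. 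A monomial of $\sigma D$ is thus produced by a \emph{diagonal selection}: a choice of one monomial from each entry $(i,\sigma(i))$. Each such selection contributes coefficient $\pm 1$, the sign being $(-1)^{s}$ where $s$ is the number of fixed points at which $-\lambda$ was chosen; since $s$ equals the $\lambda$-degree of the monomial produced, all selections realizing a fixed exponent vector $r=(r_1,r_2,r_3,r_4)$ carry the common sign $(-1)^{r_2}$. Hence $[z^{r_1}\lambda^{r_2}v^{r_3}e^{r_4}]\sigma D=(-1)^{r_2}N_{\sigma}(r)$ with $N_{\sigma}(r)\ge 0$ the number of such selections, and Statement~1 becomes the assertion $N_{\sigma}(r)=N_{\tau}(r)$.

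For Statement~2, fix a $\sigma$-selection realizing $r$ and record the multigraph $H$ on the vertex set $[n]$ with one loop or edge per selected loop-term or off-diagonal term ($e_{(i,i),a}z^{a}$ gives a loop at $i$, and $e_{(i,j),a}z^{a}$ with $i\neq j$ gives an edge $\{i,j\}$). A vertex at which $v_i$ or $-\lambda$ was selected has $H$-degree $0$, and every other vertex has $H$-degree exactly $2$ (its outgoing and incoming selected edges, merging into a doubled loop precisely at a fixed point carrying a loop-term); so $H$ is $2$-regular, i.e.\ a disjoint union of loops, doubled edges, and simple cycles of length $\ge 3$. Crucially, $H$ depends only on $r$ --- the $H$-degrees and incident edge variables are read directly from $v^{r_3}e^{r_4}$ --- not on the selection. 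Each cycle of $\sigma$ accounts for exactly one component of $H$: a fixed point for a loop, a transposition for a doubled edge, and a $(\ge 3)$-cycle for a simple cycle traversed in its own direction (its support, being connected and $2$-regular inside $H$, must be a whole component, hence a simple cycle). Running the identical argument for $\tau$ shows the partition of $[n]$ into cycle supports is forced by $r$, and on each component the only freedom is the orientation of a simple cycle; thus for every cycle $\eta$ of $\sigma$, either $\eta$ or $\eta^{-1}$ is a cycle of $\tau$, which is Statement~2. In particular $\sigma$ and $\tau$ have the same cycle type, so $\operatorname{sgn}(\sigma)=\operatorname{sgn}(\tau)$.

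It remains to prove $N_{\sigma}(r)=N_{\tau}(r)$, and this is where I expect the real work. By the above, $\tau$ arises from $\sigma$ by reversing its cycles on some set $R$ of simple-cycle components of $H$. On every loop and doubled-edge component the two admissible selections have opposite $z$-contributions and are swapped by $z\mapsto z^{-1}$, so the generating function in $z$ of the loop/doubled-edge contributions is invariant under $z\mapsto z^{-1}$; on a $(\ge 3)$-cycle component the selection is \emph{rigid} (its edge variables are determined by $r$), and reversing it negates that component's $z$-contribution via $\eta^{-1}D(z,\lambda)=\eta D(z^{-1},\lambda)$. A $\sigma$-selection realizing $r$ is then the same datum as a choice of signs on the loop/doubled-edge components whose total $z$-weight, added to the rigid $(\ge3)$-cycle weights, equals $r_1$; passing from $\sigma$ to $\tau$ merely shifts the rigid part. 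The plan is to show these two counting problems have equal answers by playing the $z\mapsto z^{-1}$ symmetry of the loop/doubled-edge generating function against the hypothesis $r\in\calA(\sigma D)\cap\calA(\tau D)$, which forces the rigid shift into a symmetric position. Carrying this final bookkeeping through --- controlling how a rigid block interacts with a cycle reversal in the presence of loops and multi-edges of the quotient graph --- is, I believe, the main obstacle; everything else is routine once the decomposition $\sigma D=\prod_\eta\eta D$ and the transpose identity are in hand.
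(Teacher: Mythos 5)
Your argument for Statement~2 is correct and essentially in the spirit of the paper's, though more elaborate: the paper just notes that the edge variable $e_{(i,j),a}$ occurs only in the matrix entries $L(z)_{i,j}$ and $L(z)_{j,i}$, so the off-diagonal factors contributed by a non-trivial cycle $\eta_i$ of $\sigma$ force $\tau$ to use the same set of entries up to transposition, i.e.\ to contain $\eta_i$ or $\eta_i^{-1}$. Your $2$-regular multigraph $H$ is a graph-theoretic repackaging of this; it works, but you do need to be explicit that $H$ and its component decomposition are read off from $r$ alone.

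The genuine gap is in Statement~1. You correctly reduce it to $N_\sigma(r)=N_\tau(r)$ and flag this as ``where the real work is'' --- but the bookkeeping you defer cannot be completed, because the equality in Statement~1 fails in general. Take a $\ZZ$-periodic graph with $W=[5]$ and entries $L(z)_{1,2}=e_1$, $L(z)_{2,3}=e_2$, $L(z)_{3,1}=e_3 z$, $L(z)_{4,4}=v_4+e_4(z+z^{-1})$, $L(z)_{5,5}=v_5+e_5(z+z^{-1})$. With $\sigma=(1\,2\,3)(4)(5)$, $\tau=(1\,3\,2)(4)(5)$, and $r$ the exponent vector of $z^{1}e_1e_2e_3e_4e_5$ (no $\lambda$, no $v$), one has $\sigma D = e_1e_2e_3\,z\,(L(z)_{4,4}-\lambda)(L(z)_{5,5}-\lambda)$ and $\tau D = e_1e_2e_3\,z^{-1}\,(L(z)_{4,4}-\lambda)(L(z)_{5,5}-\lambda)$, so $[r]\sigma D=[z^1]\,z(z+z^{-1})^2=2$ while $[r]\tau D=[z^1]\,z^{-1}(z+z^{-1})^2=1$: both nonzero, hence $r\in\calA(\sigma D)\cap\calA(\tau D)$, yet the coefficients disagree. (The paper's own closing sentence for Statement~1 is also too quick: it overlooks that several diagonal selections may realize the same monomial $r$, so $[r]\sigma D$ need not be $\pm1$.) What your sign computation does establish --- and what is actually all that Lemma~\ref{Lem:InnerHelper} needs --- is that $[r]\sigma D$ and $[r]\tau D$ both carry the sign $(-1)^{r_2}$; combined with $sgn(\sigma)=sgn(\tau)$ from Statement~2, this already precludes cancellation in $\sum_{\tau}sgn(\tau)[r]\tau D$. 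So drop the attempt to force the full equality via the $z\mapsto z^{-1}$ symmetry and record only the sign statement.
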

\begin{proof}
Let $r=(r_1,r_2,r_3,r_4)$ with $r_1\in\ZZ^d$, $r_2\in\ZZ$, and $(r_3,r_4)\in \ZZ^{V,E}$ be such that 
\begin{equation}\label{gfeb261}
    r\in \calA(\sigma D(v,e,z,\lambda)) \cap \calA(\tau D(v,e,z,\lambda)) .
\end{equation}
Decompose  $\sigma \in S_n$ as a product of disjoint of cycles $\eta_1, \dots, \eta_k$ for some $k \geq 1$, such that $\eta_i = (j_{i,1},\dots, j_{i,l_i})$ and the first $p \geq 0$ of the $\eta_i$ are the $1$-cycles $(j_{i,1}) = (j_i)$ of this product.

Recalling \eqref{eq:perm} and \eqref{eq:cycle}, \begin{equation} \label{gmar35}\sigma D(v,e,z,\lambda) = \prod_{i=1}^k \eta_i D(v.e,z,\lambda) = \prod_{i=1}^n ((L(v,e,z) - \lambda I)_{i,\sigma(i)}).\end{equation}

For $i = p+1,\dots, k$, as $\eta_i$ has no fixed points, $\eta_i D(v,e,z,\lambda)$ is a product of off-diagonal entries of $L(v,e,z)$. It follows from \eqref{eq:floquetmatrix} and \eqref{eq:cycle} that a term of $\eta_i D(v,e,z,\lambda)$ must be of the form \begin{equation}\label{gfeb263}
     e_{(j_{i,1}, j_{i,2}), a_{i,1}} \dots e_{(j_{i,l_{i}-1}, j_{i,l_i}), a_{i,l_i-1}} e_{(j_{i,l_{i}}, j_{i,1}), a_{i,l_i}  } \prod_{s=1}^{l_i} z^{a_{i,s}}, \end{equation}
for some $a_i =(a_{i,1},\dots, a_{i,l_i})\in (\ZZ^d)^{l_i}$. 

 As our $e_{(i,j),a}$ are independent, apart from the restriction that $e_{(i,j),a} = e_{(j,i),-a}$, we must have that $e_{(i,j),a}$ only appears in $L(v,e,z)_{i,j}$ and $L(v,e,z)_{j,i}$. Therefore, by \eqref{gfeb261},\eqref{gmar35} and \eqref{gfeb263}, we have that
  \begin{equation} [e_{(j_{i,1}, j_{i,2}), a_{i,1}} \dots e_{(j_{i,l_{i}-1}, j_{i,l_i}), a_{i,l_i-1}} e_{(j_{i,l_{i}}, j_{i,1}), a_{i,l_i}}]e^{r_4} \neq 0,\end{equation}
  and hence
 $\tau$ must contain $\eta_i$ or $(\eta_i)^{-1}$. 

 By interchanging $\tau$ and $ \sigma$, we conclude that for any non 1-cycle $\gamma$ appearing in the product decomposition of $\tau$, 
$\sigma$ must contain $\gamma$ or $\gamma^{-1}$.  
We finish the proof by noting that the remaining terms (1-cycles) only  involve  diagonal entries and that the nontrivial coefficient for the potential and edge variables (variable $\lambda$) in each entry $L(z)_{i,j}$  is 1  (-1).  
\end{proof}

\smallskip

\smallskip
\begin{Lemma}~\label{Lem:InnerHelper}
Let $\Gamma$ be a $\ZZ^d$-periodic graph with a (vertex) fundamental domain $W = [n]$.  Then we have for all permutations $\sigma\in S_n$,
\begin{equation}\label{gfeb266}
    \calA(\sigma D(v,e,z,\lambda)) \subseteq \calA(D(v,e,z,\lambda) \subset \ZZ^{d+1} \times \ZZ^{V,E}.
\end{equation}

\end{Lemma}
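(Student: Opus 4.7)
The plan is to prove $\calA(\sigma D(v,e,z,\lambda)) \subseteq \calA(D(v,e,z,\lambda))$ for each $\sigma \in S_n$ by ruling out cancellation in the determinantal expansion \eqref{defdet}. Fix $\sigma$ and a monomial $M = z^{r_1}\lambda^{r_2}v^{r_3}e^{r_4}$ lying in $\calA(\sigma D)$; the goal reduces to showing that
\[
[M]\, D(v,e,z,\lambda) \;=\; \sum_{\tau \in T_M} sgn(\tau)\,[M]\,\tau D(v,e,z,\lambda) \;\neq\; 0,
\]
where $T_M := \{\tau \in S_n : M \in \calA(\tau D)\}$, since nonvanishing of $[M]\,D$ is equivalent to $M \in \calA(D)$.

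Two inputs from Lemma~\ref{Lem:InnerHelper2} will drive the argument. First, part~(1) of that lemma guarantees that the coefficient $[M]\tau D$ takes a single common value $c$ for every $\tau \in T_M$, and because $\sigma \in T_M$ and $M \in \calA(\sigma D)$, this $c$ is a nonzero integer. Second, applying part~(2) to the pair $(\sigma,\tau)$ and then symmetrically to the pair $(\tau,\sigma)$ shows that the cycle decompositions of $\sigma$ and $\tau$ are matched bijectively, with each matched pair differing at most by inversion. In particular, $\sigma$ and $\tau$ share the same set of fixed points and the same multiset of non-trivial cycle lengths; since inversion preserves cycle length and an $\ell$-cycle has sign $(-1)^{\ell-1}$, we conclude $sgn(\tau) = sgn(\sigma)$ for every $\tau \in T_M$.

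Combining the two observations yields
\[
[M]\, D \;=\; sgn(\sigma)\cdot c \cdot |T_M|,
\]
a nonzero integer (hence nonzero in $\CC$), because $\sigma \in T_M$ forces $|T_M| \geq 1$ and $c \neq 0$. This establishes $M \in \calA(D)$ and therefore the inclusion $\calA(\sigma D) \subseteq \calA(D)$, completing the proof.

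The main subtlety I anticipate lies in the bookkeeping behind the symmetric application of Lemma~\ref{Lem:InnerHelper2}(2): one must check that the matching is genuinely bijective, so that $\tau$ cannot contain an extra non-trivial cycle missing from $\sigma$, and that the relation $e_{(i,j),a} = e_{(j,i),-a}$ does not secretly inflate or collapse the coefficient $c$ to zero. Both points follow immediately from Lemma~\ref{Lem:InnerHelper2} but deserve to be stated explicitly in a full write-up; the rest of the argument is a short sign-counting exercise.
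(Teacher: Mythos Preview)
Your proposal is correct and follows essentially the same approach as the paper: fix a monomial in $\calA(\sigma D)$, invoke Lemma~\ref{Lem:InnerHelper2} to show that every $\tau$ contributing to that monomial in the determinantal expansion has the same coefficient and the same sign as $\sigma$, and conclude the sum is a nonzero multiple of $[M]\sigma D$. Your write-up is in fact slightly more explicit than the paper's in justifying $sgn(\tau)=sgn(\sigma)$ (the paper simply asserts \eqref{gfeb2611}); note, though, that the ``symmetric application'' of part~(2) is not strictly needed, since the cycles $\eta_i$ already partition $[n]$ and hence fully determine $\tau$ up to cycle inversion.
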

\begin{proof}
Fix an arbitrary permutation $\sigma\in S_n$. Clearly, \eqref{gfeb266} holds if $\calA(\sigma D(v,e,z,\lambda))$ is empty. So, we will assume $\calA(\sigma D(v,e,z,\lambda))$ is nonempty and 
choose an arbitrary $r=(r_1,r_2,r_3,r_4) \in \calA(\sigma D(v,e,z,\lambda)) \subset \ZZ^{d+1} \times \ZZ^{V,E}$, where  $r_1 \in \ZZ^d$, $r_2 \in \ZZ$, $(r_3,r_4) \in \ZZ^{V,E}$. Our goal is to show that  $$r \in \calA(D(v,e,z,\lambda)) \subset \ZZ^{d+1} \times \ZZ^{V,E}.$$ 

As $D(v,e,z,\lambda) = \sum_{\tau \in S_n} sgn(\tau) \tau D(v,e,z,\lambda)$, we have \begin{align} \label{gfeb2612}\begin{split}[z^{r_1}\lambda^{r_2} v^{r_3} e^{r_4}] D(v,e,z,\lambda) = [z^{r_1}\lambda^{r_2} v^{r_3} e^{r_4}] \sum_{\tau \in S_n} sgn(\tau) \tau D(v,e,z,\lambda) \newline \\= \sum_{\tau \in S_n} sgn(\tau) [z^{r_1}\lambda^{r_2} v^{r_3} e^{r_4}] \tau D(v,e,z,\lambda).\end{split}\end{align}
If $[z^{r_1}\lambda^{r_2} v^{r_3} e^{r_4}] \tau D(v,e,z,\lambda)\neq 0$, 
by Lemma~\ref{Lem:InnerHelper2}, we have that 
\begin{equation}\label{gfeb2610}
    [z^{r_1}\lambda^{r_2} v^{r_3} e^{r_4}] \tau D(v,e,z,\lambda)= [z^{r_1}\lambda^{r_2} v^{r_3} e^{r_4}]\sigma D(v,e,z,\lambda),
\end{equation}
and 
\begin{equation}\label{gfeb2611}
    sgn(\tau)=sgn(\sigma).
\end{equation}

Now, Lemma \ref{Lem:InnerHelper} follows from \eqref{gfeb2612}-\eqref{gfeb2611} and the fact that $r=(r_1,r_2,r_3,r_4) \in \calA(\sigma D(v,e,z,\lambda))$.
\end{proof}

\smallskip

Recall that $A(f) \subset \ZZ^{d+1}$ and $N(f) \subset \RR^{d+1}$ are used to denote the support and Newton polytope of a polynomial $f \in \CC[z^{\pm},\lambda, v,e]$ for a generic choice of labeling.

\begin{proof}[\bf Proof of Theorem~\ref{Lem:Inner2}]
Assume $\sigma \in S_n$. Let $r = (r_1,r_2) \in A(\sigma D(z,\lambda)) \subset \ZZ^{d+1}$, where $r_1 \in \ZZ^d$ and $r_2 \in \ZZ$.

As $r \in A(\sigma  D(z,\lambda)) \subset \ZZ^{d+1}$, $[z^{r_1} \lambda^{r_2}] \sigma D(z,\lambda)$ must be a nonzero polynomial in $\CC[v,e]$. By Lemma~\ref{Lem:InnerHelper},  $[z^{r_1} \lambda^{r_2}] D(z,\lambda)$ is nonzero as a polynomial in $\CC[v,e]$. We conclude that $r \in A(D(z,\lambda))$.
\end{proof}
\smallskip

The following corollary follows immediately from Theorem~\ref{Lem:Inner2}.
\begin{Corollary}~\label{Lem:Inner}
     Let $w \in \ZZ^{d+1} \neq 0$ be such that $N(D_w(z,\lambda))$ is a proper face of $N(D(z,\lambda))$, and let $m = \min_{a \in A(D(z,\lambda))} w \cdot a$. 
     
    Fix any labeling. For any $\sigma \in S_n$, $\sigma D(z,\lambda)$ cannot contain a term with weight less than $m$ with respect to $w$.
\end{Corollary}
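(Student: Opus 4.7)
The plan is to derive this corollary as a near-immediate consequence of Theorem~\ref{Lem:Inner2}. The key observation is the chain of inclusions
\[
\calA(\sigma D(z,\lambda)) \;\subseteq\; A(\sigma D(z,\lambda)) \;\subseteq\; A(D(z,\lambda)),
\]
valid for any labeling, where $\calA$ denotes the support for the specific (specialized) labeling and $A$ denotes the generic support. The first inclusion holds because specializing the indeterminates $(v,e)$ to concrete values can only kill coefficients of monomials in $z,\lambda$, never create new ones: if $c_{a,b}(v,e)$ vanishes identically in $\CC[v,e]$ then it vanishes under any specialization. The second inclusion is exactly the content of Theorem~\ref{Lem:Inner2}.

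Given this chain, I would argue as follows. Fix a labeling, a permutation $\sigma \in S_n$, and suppose $\sigma D(z,\lambda)$ contains a monomial $z^{r_1}\lambda^{r_2}$ with nonzero coefficient, i.e.\ $r = (r_1,r_2) \in \calA(\sigma D(z,\lambda))$. Then by the inclusions above, $r \in A(D(z,\lambda))$, so by the definition of $m$,
\[
w \cdot r \;\geq\; \min_{a \in A(D(z,\lambda))} w \cdot a \;=\; m.
\]
Hence no term of $\sigma D(z,\lambda)$ has weight strictly less than $m$ with respect to $w$, which is the claim.

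I do not anticipate any real obstacle here: the entire content sits in the two lemmas already established, and the corollary simply records the bookkeeping that passes from a statement about generic supports (inside $\ZZ^{d+1}$ after projecting out the $(v,e)$-indeterminates) to a weight inequality that holds uniformly over all labelings. The only minor point to be careful about is to state explicitly that specialization shrinks the support, so that the assertion about a \emph{fixed} labeling follows from Theorem~\ref{Lem:Inner2}, whose conclusion is phrased in terms of generic support.
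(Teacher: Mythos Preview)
Your proposal is correct and matches the paper's approach: the paper simply states that the corollary ``follows immediately from Theorem~\ref{Lem:Inner2},'' and you have spelled out exactly the bookkeeping (specialization only shrinks support, then apply Theorem~\ref{Lem:Inner2}) that makes this immediate.
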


\section{The Case of Vertical Segments}~\label{Sec:Vertical}
\begin{Theorem}~\label{Lem:GenPolytope} Let $\Gamma$ be a $\ZZ^d$-periodic graph. The generic Newton polytope of $\Gamma$ is a vertical segment if and only if $\Gamma$ has a fundamental domain of support $0$. 
\end{Theorem}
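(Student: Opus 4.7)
The plan is to prove both directions separately, with the forward direction being the substantive one that leverages the no-cancellation result Lemma~\ref{Lem:InnerHelper} to extract combinatorial conditions on $\Gamma$, from which a support-$0$ fundamental domain can be constructed.

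\textbf{The easy direction} ($\Leftarrow$). If $W$ is a fundamental domain with $\calA(W) \subseteq \{0\}$, then by \eqref{eq:floquetmatrix} no entry of $L(z)$ carries a nontrivial monomial $z^a$ with $a \neq 0$. Thus $L(z)$ is a matrix of polynomials in $v,e$ alone, and $D(z,\lambda)=\det(L-\lambda I)$ is independent of $z$. Since the leading $\lambda^n$ coefficient is $\pm 1$ and the constant-in-$\lambda$ term is $\det L$, which is generically nonzero in $v,e$, the generic Newton polytope is the vertical segment from $(0,0)$ to $(0,n)$.

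\textbf{The main direction} ($\Rightarrow$). Assume $N=N(D(z,\lambda))$ is a vertical segment. Since $[\lambda^n]D=\pm 1$ fixes a vertex at $z$-coordinate $0$, the segment lies in $\{0\}\times\RR$, so the generic support satisfies $A(D) \subset \{0\}\times\ZZ$. Applying Lemma~\ref{Lem:InnerHelper}, which says $\calA(\sigma D(v,e,z,\lambda)) \subseteq \calA(D(v,e,z,\lambda))$ as subsets of $\ZZ^{d+1}\times\ZZ^{V,E}$, and projecting onto the $z$-exponent, I conclude that for \emph{every} $\sigma \in S_n$ the polynomial $\sigma D(v,e,z,\lambda)$ has every one of its monomials supported at $z$-exponent $0$. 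The key point exploited here is that the no-cancellation lemma prevents any individual $v,e$-monomial in some $\sigma D$ from being killed when summing over $S_n$.

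\textbf{Extracting combinatorial conditions.} I would then run through $\sigma$ cycle-by-cycle:
\emph{(i)} For $\sigma = \mathrm{id}$, $\sigma D = \prod_i (L(z)_{i,i}-\lambda)$. If some diagonal entry contained a term $e_{(i,i),a}z^a$ with $a\neq 0$, then the monomial $\big(\prod_{j\neq i}v_j\big)\,e_{(i,i),a}$ in $v,e$ appears with $z^a$ uniquely, producing a nonzero $z^a$ contribution. Hence no self-translate edges exist in $\Gamma$, so $L(z)_{i,i}=v_i$.
\emph{(ii)} For a transposition $\sigma=(i\ j)$, using (i), the $z$-dependence of $\sigma D$ reduces to $L(z)_{i,j}L(z)_{j,i} = \sum_{a,b}e_{(i,j),a}e_{(i,j),b}z^{a-b}$, and the monomial $e_{(i,j),a}e_{(i,j),b}$ with $a\neq b$ is multiplied by $z^{a-b}+z^{b-a}$, nonzero at nonzero $z$-exponent. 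So for each $i\neq j$ in $W$ there is at most one $a\in\ZZ^d$ with $(i,j+a)\in\calE(\Gamma)$; in particular, $S(\Gamma/\ZZ^d)$ has no multi-edges.
\emph{(iii)} For a $k$-cycle $(i_1\ldots i_k)$ with $k\geq 3$, all pairs $(i_s,i_{s+1})$ are distinct so the edge variables appearing in $\prod_s L(z)_{i_s,i_{s+1}}$ are pairwise distinct. Each choice $(a_1,\dots,a_k)$ of edge translations gives a distinct $e$-monomial attached to $z^{\sum a_s}$, which forces $\sum_s a_s=0$ for every closed walk $i_1\to i_2+a_1\to \dots \to i_1+\sum a_s$ in $\Gamma$.

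\textbf{Constructing a support-$0$ fundamental domain.} With conditions (i)--(iii) in hand, I would build $\phi:W\to\ZZ^d$ on each connected component of $S(\Gamma/\ZZ^d)$: choose a base vertex $i_0$, set $\phi(i_0)=0$, and propagate by $\phi(j)=\phi(i)+a$ whenever $(i,j+a)\in\calE(\Gamma)$; well-definedness follows from (iii) applied to the difference of two $i_0$-to-$j$ paths (a cycle in the quotient). On any isolated vertex of $S(\Gamma/\ZZ^d)$, set $\phi$ arbitrarily. Define the new fundamental domain $W':=\{i+\phi(i):i\in W\}$. For any edge $(i,j+a)\in\calE(\Gamma)$ with $i,j\in W$, translating by $\phi(i)$ gives the equivalent edge $(i+\phi(i),\,j+\phi(j)+(a+\phi(i)-\phi(j)))$, and since $\phi(j)-\phi(i)=a$ by construction the translation vector is $0$. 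So $\calA(W')\subseteq\{0\}$.

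\textbf{Main obstacle.} The only delicate point is the monomial-independence argument in steps (i)--(iii): one must verify, for each cycle length, that distinct tuples of edge translations produce distinct $v,e$-monomials so that no term in $\sigma D$ can cancel with another term inside the same $\sigma D$. For $k\geq 3$ this is immediate from the distinctness of the ordered pairs $(i_s,i_{s+1})$; for $k=2$ one needs the two-term $z^{a-b}+z^{b-a}$ combination, which is still nonzero since $a\neq b$. Once this bookkeeping is done, the translation from algebra to the existence of $\phi$ is a standard cocycle argument.
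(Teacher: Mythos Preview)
Your proof is correct and takes a genuinely different route from the paper's.

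The paper proves the forward direction by induction on $n=|W|$: it first shows (Lemma~\ref{Lem:Help2GenPolytope}) that if $N$ is a vertical segment then $\eta D(z,\lambda)\in\CC[\lambda]$ for every cycle $\eta$; then, removing one vertex and invoking the induction hypothesis, it obtains a support-$0$ fundamental domain $U'$ for $\Gamma_U$; finally Lemma~\ref{Lem:HelpGenPolytope} shows how to shift the connected components of $S(\Gamma_{U'}/\ZZ^d)$ so as to absorb the remaining vertex into a support-$0$ fundamental domain for all of $\Gamma$. Your argument instead reads off, directly from Lemma~\ref{Lem:InnerHelper}, the three combinatorial constraints (no diagonal $z$-terms, at most one shift per off-diagonal pair, zero total shift around any cycle of length $\ge 3$ in the quotient) and then builds the shift function $\phi$ in one stroke via the standard cocycle/spanning-tree construction. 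This is more conceptual: your conditions (i)--(iii) are exactly the statement that the $\ZZ^d$-valued edge-shift is a coboundary on $S(\Gamma/\ZZ^d)$, which immediately yields a support-$0$ fundamental domain. The paper's inductive approach is more modular (Lemmas~\ref{Lem:Help2GenPolytope} and~\ref{Lem:HelpGenPolytope} are stated as standalone results) but less transparent about why the construction works globally.

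Two minor points worth tightening. In step~(i), the $(v,e)$-monomial $(\prod_{j\neq i}v_j)\,e_{(i,i),a}$ actually appears with $z^a+z^{-a}$ rather than $z^a$ alone (since $e_{(i,i),a}=e_{(i,i),-a}$); this does not affect the conclusion. In the construction, ``the difference of two $i_0$-to-$j$ paths'' is in general only a closed walk, not a simple cycle; the clean way to phrase well-definedness is to fix a spanning tree, define $\phi$ along it, and check each fundamental cycle---these are simple cycles of length $\ge 3$ in the simple graph $S(\Gamma/\ZZ^d)$, so step~(iii) applies directly.
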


The proof of Theorem~\ref{Lem:GenPolytope} relies on the following two lemmas.
\begin{Lemma}~\label{Lem:Help2GenPolytope} Let $\Gamma$ be a $\ZZ^d$-periodic graph. Suppose that the generic Newton polytope $N$ of $\Gamma$ is a vertical segment, then \begin{enumerate} 
\item $N$ is the vertical segment  connecting $(0,\dots,0)$ and $(0,\dots, 0, n)$, 
\item for every $\sigma \in S_n$, $\sigma D(z,\lambda)$ must be a univariate polynomial in $\lambda$, 
\item and for every cycle $\eta$, $\eta D(z,\lambda)$ must be a univariate polynomial in $\lambda$.\end{enumerate}
\end{Lemma}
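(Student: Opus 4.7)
The plan is to prove the three claims in the stated order, with Theorem~\ref{Lem:Inner2} (the containment $A(\sigma D) \subseteq A(D)$) serving as the central tool.

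For (1), the idea is to identify a single permutation whose contribution to $D$ already pins down the whole vertical segment. The natural choice is $\sigma = \mathrm{id}$: because $\Gamma$ has no self-loops, $L(z)_{ii} = v_i$, so
\[
\mathrm{id}\cdot D(z,\lambda) = \prod_{i=1}^n (v_i - \lambda).
\]
Expanding, the coefficient of $\lambda^k$ is, up to sign, the elementary symmetric polynomial $e_{n-k}(v_1,\dots,v_n)$, which is a nonzero element of $\CC[v]$. Thus $A(\mathrm{id}\cdot D) = \{(0,\dots,0,k):0\le k\le n\}$, and Theorem~\ref{Lem:Inner2} gives this set is contained in $A(D)$. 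Combined with the hypothesis that $N$ is a vertical segment (necessarily parallel to the $\lambda$-axis, and containing $(0,\dots,0,n)$ since $[\lambda^n]D = \pm 1$), this forces $N$ to be exactly the segment from $(0,\dots,0,0)$ to $(0,\dots,0,n)$.

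For (2), I would apply Theorem~\ref{Lem:Inner2} once more. It gives $A(\sigma D) \subseteq A(D) \subseteq N \cap \ZZ^{d+1}$, which by part (1) lies on the line $z=0$. By definition of generic support, every coefficient $[z^a\lambda^b]\sigma D(v,e,z,\lambda)$ with $a \neq (0,\dots,0)$ must vanish identically as a polynomial in $(v,e)$. Hence $\sigma D$ has no $z$-dependence inside $\CC[z^{\pm},\lambda,v,e]$, and specializing to any labeling yields a univariate polynomial in $\lambda$.

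For (3), the approach is to reduce cycles to full permutations. The length-one case $\eta = (j)$ is immediate since $\eta D = v_j - \lambda$. For $\eta$ of length $\ge 2$, extend $\eta$ to a permutation $\sigma$ by adjoining the 1-cycles for all vertices outside the support of $\eta$. Then
\[
\sigma D(z,\lambda) = \eta D(z,\lambda) \cdot g, \qquad g := \prod_{i \notin \eta}(v_i - \lambda) \in \CC[\lambda,v].
\]
Part (2) gives $\sigma D \in \CC[\lambda,v,e]$, while the defining product shows $\eta D \in \CC[z^{\pm},e]$ (off-diagonal entries of $L(z)$ involve no $\lambda$ or $v$). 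Dividing by the nonzero element $g$ in the fraction field of $\CC[z^{\pm},\lambda,v,e]$ places $\eta D$ in $\CC(\lambda,v,e)$, so $\eta D \in \CC[z^{\pm},e] \cap \CC(\lambda,v,e) = \CC[e]$. In particular $\eta D$ is independent of $z$, hence is a constant in $\lambda$, which is univariate as required.

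I do not anticipate a serious obstacle; the main care point is bookkeeping the two viewpoints of support---as a subset of $\ZZ^{d+1}$ via the coefficients of $\sigma D$ in $\CC[v,e]$ versus the support of $\sigma D(z,\lambda)$ at a fixed labeling---and using Theorem~\ref{Lem:Inner2} to move between them.
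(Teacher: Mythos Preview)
Your argument rests on the claim that $L(z)_{i,i} = v_i$ ``because $\Gamma$ has no self-loops,'' and this is false. An edge $(i,\, i+a)$ with $a \neq 0$ joins two \emph{distinct} vertices of $\calV(\Gamma)$ (the action is free), so it is not a self-loop, yet it contributes a term $e_{(i,i),a}\,z^a$ to the diagonal entry $L(z)_{i,i}$; see~\eqref{eq:floquetmatrix}. The standard $\ZZ$-lattice with a one-vertex fundamental domain already exhibits this. The paper itself relies on exactly this possibility in the very next lemma, where it uses part~(3) to \emph{deduce} that $L(z)_{1,1}$ is constant rather than assuming it.

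This error breaks your proofs of (1) and (3). In (1), $\mathrm{id}\cdot D = \prod_i (L(z)_{i,i}-\lambda)$ need not equal $\prod_i (v_i-\lambda)$; the repair is to note only that its constant term is $\prod_i v_i$ (since the absence of genuine self-loops forces $[z^0]L(z)_{i,i} = v_i$), so $(0,\dots,0)\in A(D)$, which together with $(0,\dots,0,n)$ is all you need. In (3), both the $1$-cycle case ``$\eta D = v_j - \lambda$'' and the factor $g = \prod_{i\notin\eta}(v_i-\lambda)$ are wrong; $g$ may carry $z$-monomials, so your division argument placing $\eta D$ in $\CC(\lambda,v,e)$ collapses. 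The paper's fix is to use Minkowski sums: $g$ still has nonzero constant term $\prod_{i\notin\eta} v_i$, hence $0 \in N(g)$, and then $N(\eta D) \subseteq N(\eta D) + N(g) = N(\sigma D)$, which by (2) lies in the vertical segment. Once you make these corrections your approach coincides with the paper's.
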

\begin{proof}

Notice that \begin{equation}\label{eq:constant} [v_1 v_2 \cdots v_n] D(v,e,z,\lambda) = 1.\end{equation} It follows that for a generic labeling, $D(z,\lambda)$ has a nonzero constant term.

As $D(z,\lambda)$ has a nonzero constant term and a nonzero term $\pm \lambda^n$ for a generic labeling, we see that $N$ contains the vertices $(0,\dots,0)$ and $(0,\dots, 0, n)$. Thus, $N$ is a vertical segment if and only if there does not exist any $(a,b) \in N$ such that $a \neq 0 \in \ZZ^{d}$ and $b \in \ZZ$. By~\eqref{eq:dispersionPoly}, $n$ is the largest possible exponent of $\lambda$ and $0$ is the smallest possible exponent of $\lambda$ that can appear in $D(z,\lambda)$. Thus, if $N$ is a vertical segment, it is exactly the vertical segment connecting $(0,\dots,0)$ and $(0,\dots, 0, n)$.

By Theorem~\ref{Lem:Inner2}, we have $N(\sigma D(z,\lambda)) \subseteq N$, and so $N(\sigma D(z,\lambda))$ is contained in the vertical segment with endpoints $(0,\dots,0)$ and $(0,\dots, 0, n)$. Upon fixing a labeling, as $\N(\sigma D(z,\lambda)) \subseteq N(\sigma D(z,\lambda))$, it follows that $\sigma D(z,\lambda) \in \CC[\lambda]$.

Finally, note that if $\eta = (j_1,\dots, j_p)$ (where $p \geq 1$) is a cycle, then let $\sigma$ be the permuation such that $\sigma_i = \eta(i)$ for $i \in \{j_1,\dots, j_p\}$ and $\sigma(i) = i$ otherwise. For a generic potential \begin{equation} p(z,\lambda) := \prod_{i \in [n] \smallsetminus \{j_1,\dots, j_p\}} (L(z)-\lambda I)_{i,i}\end{equation} has a nonzero constant term. As $\sigma D(z,\lambda) = p(z,\lambda) \eta D(z,\lambda)$, we have that \begin{equation} N(\eta D(z,\lambda)) \subset N(\sigma D(z,\lambda)).\end{equation} As $N(\eta D(z,\lambda))$ is contained in the vertical segment with endpoints $(0,\dots,0)$ and $(0,\dots, 0, n)$, we conclude that     $\eta D(z,\lambda) \in \CC[\lambda]$. 
\end{proof}

\medskip
\begin{Lemma}~\label{Lem:HelpGenPolytope} Let $\Gamma$ be a $\ZZ^d$-periodic graph with an $n$ vertex fundamental domain $W$ and suppose that the generic Newton polytope $N$ of $\Gamma$ is a vertical segment. If there exists $U \subset W$, where $|U| = |W|-1$, such that $U$ is a fundamental domain of support $0$ of $\Gamma_U$, then $\Gamma$ has a fundamental domain of support $0$. 
\end{Lemma}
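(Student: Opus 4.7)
The plan is to construct a fundamental domain of support $0$ for $\Gamma$ by choosing a suitable $\ZZ^d$-translate of each vertex in $W$. Let $w$ denote the unique vertex in $W \smallsetminus U$ and, for $u \in U$, write $\calA_{w,u} := \{a \in \ZZ^d : (w, u+a) \in \calE(\Gamma)\}$. The key tool is Lemma~\ref{Lem:Help2GenPolytope}(3): for every cycle $\eta \in S_n$, the polynomial $\eta D(z,\lambda)$ lies in $\CC[\lambda]$, i.e., viewed as an element of $\CC[v,e,z^{\pm},\lambda]$, it has no monomials of nonzero $z$-degree. We will extract combinatorial constraints on the edges incident to the orbit of $w$ from this vanishing and then build the desired fundamental domain.

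First, apply the lemma to the $1$-cycle $(w)$: the factor $L(z)_{w,w}-\lambda$ must be $z$-independent, so $w$ has no self-edges in the quotient. Next, for each $u \in U$ adjacent to the orbit of $w$, the $2$-cycle $(w,u)$ gives
\[ \eta D = L(z)_{w,u}\,L(z)_{u,w} = \sum_{a,a' \in \calA_{w,u}} e_{(w,u),a}\, e_{(w,u),a'}\, z^{a-a'}. \]
Because the edge indeterminates are independent, every coefficient of $z^c$ with $c \neq 0$ must vanish identically, which forces $|\calA_{w,u}| \leq 1$. Write $c_u \in \ZZ^d$ for its unique element when the set is nonempty.

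The third step propagates these shifts. Let $G_U$ be the finite simple graph on $U$ whose edges are the support-$0$ edges of $\Gamma_U$; by hypothesis this accounts for every edge of $\calE(\Gamma_U)$ up to the $\ZZ^d$-action. Fix two vertices $u, u' \in U$ both adjacent to the orbit of $w$ that lie in the same connected component $C$ of $G_U$, and choose any simple path $u = v_0, v_1, \ldots, v_k = u'$ in $G_U$. Apply the lemma to the cycle $\eta = (w, v_0, v_1, \ldots, v_k)$: each intermediate factor $L(z)_{v_i,v_{i+1}}$ is a single nonzero constant $e_{(v_i,v_{i+1}),0}$, so the only $z$-dependence of $\eta D$ comes from $L(z)_{w,v_0} = e_{(w,v_0),c_u}\, z^{c_u}$ and $L(z)_{v_k,w} = e_{(w,v_k),c_{u'}}\, z^{-c_{u'}}$, producing a single monomial of $z$-degree $c_u - c_{u'}$. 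Vanishing forces $c_u = c_{u'}$, so within each connected component $C$ of $G_U$ meeting the $w$-neighborhood, a common shift $c_C \in \ZZ^d$ exists.

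Finally, define $t : W \to \ZZ^d$ by $t(w) := 0$, $t(u) := c_C$ for each $u$ in a component $C$ meeting a $w$-neighbor, and $t(u) := 0$ for all other $u \in U$, and set $W_{\text{new}} := \{u + t(u) : u \in W\}$. Each $\ZZ^d$-orbit still has exactly one representative, so $W_{\text{new}}$ is a fundamental domain. To check that $\calA(W_{\text{new}}) \subseteq \{0\}$, one enumerates the edges of $\Gamma$: edges within a single component $C$ of $G_U$ retain support $0$ because both endpoints shift by $c_C$; edges $(w,u+c_u) \in \calE(\Gamma)$ with $u$ in component $C$ have $c_u = c_C$, so their $W_{\text{new}}$-representation $(w, u+c_C)$ has support $0$; the absence of $w$-self-edges from step one, together with the hypothesis $\calA(U) \subseteq \{0\}$ which rules out edges between distinct components of $G_U$, excludes all remaining cases. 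The main obstacle is the third step: one must design permutation cycles whose $\eta D$ has nonzero generic support and whose $z$-degree encodes the desired equality $c_u = c_{u'}$. Once these compatibilities are in place, defining $t$ as a single-valued function on $W$ and verifying support-$0$ is routine.
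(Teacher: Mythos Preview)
Your proof is correct and follows essentially the same route as the paper's: both use Lemma~\ref{Lem:Help2GenPolytope}(3) on the $1$-cycle $(w)$ to kill self-edges, on the $2$-cycles $(w,u)$ to force $|\calA_{w,u}|\le 1$, and on longer cycles running along a path in the simplified quotient graph $S(\Gamma_U/\ZZ^d)$ (your $G_U$) to show that the shifts $c_u$ are constant on connected components, after which the translated fundamental domain is assembled in the obvious way. Your final support-$0$ verification is spelled out in more detail than the paper's, which simply declares it ``easy to check.''
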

\begin{proof}
Assume that our labeling is generic, and assume $W = [n]$. Without loss of generality, assume $U = \{2,\dots, n \}$ is a support $0$ fundamental domain of $\Gamma_U$. 

As $U$ has support 0, $L|_U(z)$ has only constant entries. As the generic Newton polytope $N$ of $\Gamma$ is a vertical segment, $L(z)_{1,1}$ must be a constant. Otherwise for  the cycle  $\eta=(1)$,   $\eta D(z,\lambda)$ is not a univariate polynomial in $\lambda$, contradicting Lemma~\ref{Lem:Help2GenPolytope}.  We are left to consider the matrix entries $L(z)_{1,i}$ and $L(z)_{i,1}$ for $i \in [n]$ with $i\geq 2$. 

By Lemma~\ref{Lem:Help2GenPolytope}, as for each cycle $\eta = (1 \  i)$, $i \in [n]$, $\eta D(z,\lambda)$ is in $\CC[\lambda]$, each $L(z)_{1,i}$ is either $0$ or a single term in $\CC[z^{\pm}]$. By \eqref{gfeb252}, if $L(z)_{j,1} = c_2 z^{a}$, then $L(z)_{1,j} = c_2 z^{-a}$.

Remark that there are only finitely many distinct vectors $A_1, \dots, A_l \in \ZZ^d$ such that $L(z)_{1,i} = c z^{-A_j}$ for some $i \in [n]$ and some constant $c \neq 0$. Consider the simplified quotient graph of $\Gamma_U$, $S(\Gamma_{U}/\ZZ^d)$. Notice that if $i$ and $i'$ are such that $L(z)_{(1,i)} = c z^{-A_j}$ and $L(z)_{(1,i')} = c' z^{-A_{j'}}$ for some $j \neq j'$ and $c,c' \neq 0$, then $i$ and $i'$ cannot be   connected in $S(\Gamma_{U}/\ZZ^d)$. Indeed, if $i$ and $i'$ are   connected by $i, u_1,\dots, u_s, i'$, then there exists a cycle $\eta = (1 \ i \  u_1  \ \dots \  u_s \ i')$ (noting that the submatrix $L|_{U}(z)$ is a matrix of all constants) such that $\eta D(z,\lambda) \not \in \CC[\lambda]$, contradicting Lemma~\ref{Lem:Help2GenPolytope}.  

Let us partition $[n] \smallsetminus \{1 \}$ into sets $I_1,\dots, I_s$, where each $I_j$ is a connected component of $S(\Gamma_{U}/\ZZ^d)$, $j=1,2,\dots, s$.
 By the previous paragraph, we have that for each $I_j$,  there exists $A_j$ such that  $L(z)_{1,k}=c_k z^{-A_j}$  for all $k\in I_j$  (we allow $c_k=0$). Now consider the new fundamental domain $W'$ given by $\{1\}\cup (\cup_{j \in [s]} I_j+A_j)$.

It is easy to check that  $W'$ is a fundamental domain of $\Gamma$ of support $0$. 
\end{proof}

\begin{proof}[\bf Proof of Theorem~\ref{Lem:GenPolytope}]

Note that one direction is trivial (follows from the proof of Lemma~\ref{fact:componentzero}).

We are left to show that if the generic Newton polytope of $\Gamma$ is a vertical segment, then $\Gamma$ has a fundamental domain $W$ of support $0$. 

We will prove this by induction.

Let $\Gamma$ be a $\ZZ^d$-periodic graph, let $W = [n]$ be some vertex fundamental domain of $\Gamma$, and assume that our labeling is generic. 

Let $n=1$. In this case, it is obvious that $D(z,\lambda) =L(z)_{1,1}-\lambda$ is a polynomial in $\lambda$ if and only if $\Gamma$ has a fundamental domain of support $0$.

We continue by induction on $n$. Suppose that when $n <k$, if $N$ is a vertical segment in $\lambda$, then $\Gamma$ has a fundamental domain of support $0$. 

Consider the case when $n = k$, and assume that the generic Newton polytope of $\Gamma$ is a vertical segment. Let $U = [k] \smallsetminus \{1 \}$. By  \eqref{eq:perm} and \eqref{defdet}, one has that

\begin{equation}\label{gm11}
\sum_{\sigma \in S_k \mid \sigma(1)=1} \sigma D(z,\lambda) = (L(z)_{1,1}-\lambda) D|_U(z,\lambda).\end{equation}

As $(L(z)_{1,1}-\lambda)$ has a nonzero constant ($v_1$), by \eqref{gm11}, for a generic labeling, $$N(D|_U(z,\lambda)) \subseteq N(D(z,\lambda)) = N.$$ Therefore, the generic Newton polytope of $\Gamma_U$ (given by $N(D|_U(z,\lambda))$)   is a vertical segment. From the induction hypothesis, it follows that $\Gamma_U$ has a fundamental domain $U'$ of support $0$. After noting that $W' = \{1 \} \cup U'$ is a fundamental domain of $\Gamma$, it follows from Lemma~\ref{Lem:HelpGenPolytope} that there exists a fundamental domain of support $0$ of $\Gamma$. 
\end{proof}

\section{The Combinatorics of a Proper   Face}\label{Sec:Comb}

\begin{Theorem}~\label{Lem:PotIndep} 
If $F = N_w\subset \RR^{d+1}$ is a proper vertical face, then there exists  $i \in [n]$ such that  $D_w(z,\lambda)$   is  independent of $v_i$.
\end{Theorem}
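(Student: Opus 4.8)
The goal is to show that if $F = N_w$ is a proper vertical face of the generic Newton polytope $N = N(D(z,\lambda))$, then the facial polynomial $D_w(z,\lambda)$ fails to involve some potential variable $v_i$. The plan is to argue by contradiction: suppose $D_w$ involves every $v_i$, $i \in [n]$. I would first exploit the product expansion of the determinant together with Corollary 4.5 (which says no $\sigma D(z,\lambda)$ can carry a term of weight strictly below $m := \min_{a \in A(D)} w\cdot a$). Combining this with formula \eqref{defdet} and the facial-sum rule \eqref{eq:facialsum}, we get $D_w(z,\lambda) = \sum_{\sigma\in S_n}\mathrm{sgn}(\sigma)\,(\sigma D)_w(z,\lambda)$, where $(\sigma D)_w$ is the part of $\sigma D$ of minimal weight $m$ (or zero if $\sigma D$ has no weight-$m$ term). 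So $v_i$ appears in $D_w$ only if it appears in some $(\sigma D)_w$, i.e. the diagonal entry $(L(z)-\lambda I)_{i,i}$ contributes its $v_i$-term $v_i\cdot 1$ to a minimal-weight monomial of $\sigma D$.

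The combinatorial heart is the following. For a permutation $\sigma$ whose weight-$m$ part is nonzero and contains a monomial in which the $i$-th factor is exactly $v_i$ (so $\sigma(i)=i$ is a fixed point of $\sigma$, contributing the constant/potential part rather than the $-\lambda$ or an edge term), track the weight contributions of all the factors. Each diagonal factor $(L(z)-\lambda I)_{j,j}$ contributes either $v_j$ (weight $0$ in $z$, degree $0$ in $\lambda$ — so $z$-weight $0$), $-\lambda$, or an edge term $e_{(j,j),a}z^a$; but since $\Gamma$ has no self-loops there are no diagonal edge terms, so a diagonal factor contributes $v_j$ (at the point $(0,\dots,0)\in\RR^d$, $\lambda$-exponent $0$) or $-\lambda$ (at $(0,\dots,0)$, $\lambda$-exponent $1$). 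Off-diagonal factors contribute monomials $e_{(j,k),a}z^a$. Thus the $z$-part of the weight of any monomial of $\sigma D$ comes entirely from the cycles of length $\geq 2$ in $\sigma$, and the choice of "$v_i$ vs $-\lambda$" on the fixed points is free. The key point: if $v_i$ shows up in a minimal-weight monomial, then replacing that $v_i$ factor by $-\lambda$ (keeping all cycle choices fixed) produces another monomial of $\sigma D$ of the *same* $z$-weight $m$ but with $\lambda$-exponent increased by one — hence $F$ already being forced to be vertical, and more importantly, running this over all $i$ forces $F$ to contain the full vertical segment from $\lambda$-exponent $0$ up to $\lambda$-exponent $n$ at every lattice point of the $z$-shadow of $F$...

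More precisely, I expect the right statement to be: the $z$-shadow (projection onto $\RR^d$) of $F$ must be a single point $a_0\in\ZZ^d$, because $m = w_z\cdot(\text{$z$-weight})$ is minimized only on the lattice points realizable as sums of edge-weights around the non-fixed cycles, and the "free" choice of $v_i$ vs $-\lambda$ on fixed points cannot change $a_0$; then $F$ vertical plus "$D_w$ involves every $v_i$" would force, via the fixed-point-swapping argument, that $F$ is the whole vertical segment $\{a_0\}\times[0,n]$ — and then $D_w$ must equal (up to a unit) $z^{a_0}\prod_{i}(c_i - \lambda)$ for suitable constants, which is a *univariate polynomial in $z^{a_0}\lambda$* structurally, and one checks this contradicts $F = N_w$ being a *proper* face unless $N$ itself is a vertical segment — but a proper face is by definition not all of $N$, and if $N$ were a vertical segment then $F$ proper could not be vertical of full length. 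I would need to chase this dichotomy carefully; the contradiction should ultimately come from the fact that a full-length vertical facial polynomial forces $w$ to be, up to scaling, $\pm e_\lambda$ (the direction dual to $\lambda$) with no $z$-component, but then $N_w$ is not proper (it is the top or bottom facet only if $N$ is degenerate, and generically $[\lambda^n]D = \pm1$ with a nonzero constant term pins the top and bottom, making the vertical direction give $N$ itself when $N$ is a segment).

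The main obstacle I anticipate is making the fixed-point-swapping argument global: one swap at vertex $i$ shows a single pair of $\lambda$-exponents coexist in $F$, but to conclude $F$ contains the *entire* segment of length $n$ I need to swap all fixed points simultaneously while ensuring the resulting monomials do not cancel in the sum $\sum_\sigma \mathrm{sgn}(\sigma)(\sigma D)_w$. Cancellation control is exactly what Lemma 4.3 / Lemma 4.4 were set up to handle (two permutations $\sigma,\tau$ whose minimal-weight monomials share a $v,e,z,\lambda$-monomial must have matching cycle structures up to inversion and matching signs, so their contributions *add* rather than cancel), so I would invoke those to see that the coefficient of the extreme monomial $z^{a_0}\lambda^n$ in $D_w$ is $\pm1\neq 0$ and likewise the constant-in-$\lambda$ coefficient is a nonzero polynomial in $v,e$; combined with verticality and the support being contained in $\{a_0\}\times\ZZ$, this pins $F$ to the full segment and delivers the contradiction with properness. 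The bookkeeping of which monomials survive is where the real work lies; everything else is assembling Corollary 4.5, \eqref{eq:facialsum}, and the no-self-loops normalization.
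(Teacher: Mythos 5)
Your setup is on target: arguing by contradiction, expanding $D_w$ through the permutation sum via \eqref{eq:facialsum}, using Corollary~\ref{Lem:Inner} as the ultimate obstruction, and noting that the no-self-loop normalization makes each diagonal factor $(L(z)-\lambda I)_{i,i}$ equal to $v_i-\lambda$ with $z$-weight $0$. You also correctly observe that the identity matching (all fixed points choosing $v_i$) carries weight $0$, which is indeed the pivot in the paper's argument. But the combinatorial core you sketch does not close, and the gap you flag at the end is essential, not just bookkeeping.

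The fixed-point swap ($v_i \leftrightarrow -\lambda$) is local to a single permutation $\sigma$ fixing $i$. To conclude that $F$ contains the full vertical segment $\{a_0\}\times[0,n]$, you would have to perform all $n$ swaps inside one permutation, which forces $\sigma = \mathrm{id}$. But $\mathrm{id}\, D = \prod_j (v_j-\lambda)$ has $z$-weight $0$, and Lemma~\ref{lem:propvert} gives $m<0$, so the identity contributes nothing at weight $m$; in particular $(a_0,n)$ with $a_0\neq 0$ is never in $F$ (indeed $[\lambda^n]D=\pm1$, so the only point of $N$ at $\lambda$-height $n$ is $(0,\dots,0,n)$). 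Likewise, the claim that the $z$-shadow of $F$ must be a single point is not justified: a higher-dimensional vertical face can project to a positive-dimensional polytope in $\RR^d$. So the route ``show $F$ is a full vertical segment, contradict properness'' cannot be carried out.

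The paper instead packages the $n$ witnessing permutations $\sigma_1,\dots,\sigma_n$ (one per $v_i$, with $\sigma_i(i)=i$ and $\min_{a\in A(\sigma_i D)} w\cdot a = m$) into an $n$-regular bipartite multigraph on $2n$ vertices with edge weights $\omega(u_i,t_j)=\min_{a\in A((L(z)-\lambda I)_{i,j})} w\cdot a$. The total edge weight is exactly $nm$. Removing the weight-$0$ identity matching (this is where $[v_i](L_{i,i}-\lambda)_w=1$, i.e.\ $\omega(u_i,t_i)=0$, is used) leaves an $(n-1)$-regular bipartite multigraph still of total weight $nm<0$; Hall's marriage theorem decomposes it into $n-1$ perfect matchings, and the pigeonhole principle produces one of weight strictly below $m$. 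That matching is a permutation $\tau$ whose $\tau D$ carries a term of weight $<m$, contradicting Corollary~\ref{Lem:Inner}. This regular-multigraph/Hall/pigeonhole mechanism is the missing idea in your plan, and it replaces (rather than repairs) the swap-and-globalize strategy.
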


For Theorem~\ref{Lem:PotIndep}, we will make use of some notions from graph theory. We now give the graph-theoretic background necessary for our proofs. Those who already have a basic background in graph theory can jump to Lemma~\ref{lem:propvert}.

Given a finite graph $G=(U,T)$ with vertices $U$ and edges $T$, a collection of vertices $U' \subset U$ is an \demph{independent set}, if for all pairs of vertices $u_1,u_2 \in U'$, the edge $(u_1,u_2)$ is not an element of $T$. The graph $G$ is \demph{bipartite} if there exist two independent sets $U_1, U_2 \subset U$ such that $U_1 \cup U_2 = U$ and $U_1 \cap U_2 = \{ \emptyset \}$. The graph $G$ is a \demph{multigraph} if we allow $T$ to contain multiple edges between the same two vertices. Given a multigraph $G$, the \demph{degree} of a vertex $u \in U$ is the number of edges connected to it. The (multi)graph $G$ is \demph{$k$-regular} if every vertex has degree $k$. A collection of edges $P \subset T$ is a \demph{perfect matching} (or $1$-factor) if the graph $G' = (U,P)$ is $1$-regular. The graph $G$ is \demph{weighted} if we assign a numerical value to each edge of $S$. 

Let $G = (U,T)$ be a $k$-regular ($k>0$) bipartite multigraph such that $U$ can be partitioned into independent sets $U_1$ and $U_2$ such that $|U_1| = |U_2| = s$. \demph{Hall's marriage theorem} states that $G$ has a perfect matching $P$. If we remove the edges $P$ from $G$, we are left with a $k-1$-regular bipartite multigraph (if $k-1 >0$, then we can again apply Hall's marriage theorem).

\begin{Lemma}\label{lem:propvert}
    Let $N$ be the generic Newton polytope of $\Gamma$. If there exists a proper vertical face $F \subsetneq N$ identified by a vector $w \in \ZZ^{d+1}$, then 
    \begin{enumerate}
        \item $w = (w_1,\dots, w_d, 0) \neq (0,\dots, 0)$,
        \item $F$ cannot contain the point $(0,\dots, 0)$,
        \item and $w \cdot a = m < 0$ for all $a \in F$.
    \end{enumerate} 
\end{Lemma}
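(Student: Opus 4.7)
The plan is to establish the three conclusions in the order (1), (3), (2), with (3) being the main step that leverages the structural symmetry of $N$.

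First I would argue (1). Because $F$ is a proper face of $N$, the identifying vector $w$ cannot be zero, since the minimum $0 \cdot a = 0$ would be attained everywhere on $N$, forcing $F = N$. For $w_{d+1} = 0$, I would invoke the verticality hypothesis: there exist points $(a,b), (a,c) \in F$ with $b \neq c$, and since both minimize $w \cdot (\cdot)$ to the common value $m$, subtracting gives $w_{d+1}(b-c) = 0$, hence $w_{d+1} = 0$.

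Next, for (3), I would exploit the symmetry of $N$. The identity \eqref{gfeb252}, $L(z) = L(z^{-1})^T$, combined with invariance of the determinant under transpose, yields $D(z,\lambda) = D(z^{-1},\lambda)$; consequently $N$ is invariant under the involution $(a_1,\dots,a_d,b) \mapsto (-a_1,\dots,-a_d,b)$ acting only on the $z$-coordinates. Observe also that $N$ contains both $(0,\dots,0)$ and $(0,\dots,0,n)$, since generically $[z^0\lambda^0]D$ contains the term $\prod_i v_i$ (contributed by the identity permutation) and $[z^0\lambda^n]D = (-1)^n$. In particular, $m \leq w \cdot (0,\dots,0) = 0$. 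Suppose for contradiction that $m = 0$. Then, using $w_{d+1}=0$, every $(a,b) \in N$ satisfies $w\cdot(a,b) \geq 0$, while the symmetric point $(-a,b) \in N$ yields $w \cdot (-a,b) = -w\cdot(a,b) \geq 0$, forcing $w\cdot(a,b) = 0$. Hence every point of $N$ attains the minimum, so $F = N$, contradicting that $F$ is proper. Therefore $m < 0$.

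Finally, (2) follows immediately from (3): since $w \cdot (0,\dots,0) = 0 > m$, the origin cannot lie on the minimizing face $F$. The main obstacle is part (3); (1) and (2) are essentially bookkeeping once (3) is in hand, but (3) requires the structural input that $N$ is symmetric about the $\lambda$-axis, an observation that goes beyond the formal definition of a vertical face.
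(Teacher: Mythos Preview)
Your proof is correct and rests on the same structural observation as the paper's: the symmetry $D(z,\lambda)=D(z^{-1},\lambda)$ forces $N$ to be invariant under $(a,b)\mapsto(-a,b)$. The difference is in organization and in how the contradiction is extracted. The paper proves (2) first and then deduces (3): assuming $(0,\dots,0)\in F$, it shows the entire vertical axis $[(0,\dots,0),(0,\dots,0,n)]$ lies in $F$, and then for any $(a,b)\in N$ invokes a convexity fact (their Claim~1: any segment in $N$ whose interior meets $F$ lies entirely in $F$) applied to $[(a,b),(-a,b)]$ to conclude $(a,b)\in F$, whence $F=N$. You instead prove (3) directly by a one-line linear-algebra argument: if $m=0$ then $w\cdot(a,b)\ge 0$ and $w\cdot(-a,b)=-w\cdot(a,b)\ge 0$ force equality everywhere, so $F=N$; then (2) drops out of (3). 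Your route is slightly more economical, since it bypasses the convexity claim and the intermediate step about the vertical axis, while the paper's version is a bit more geometric in flavor. Both are equally valid.
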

\begin{proof}
    As $F$ is a proper vertical face, $F \subsetneq N$. It follows that $N$ cannot be a vertical segment.

Let $w \in\ZZ^{d+1}$  be the vector identified $F$, namely
 \begin{equation}~\label{eq:face} F = \{a \in N \mid w \cdot a = \min_{b \in N} w \cdot b \}.\end{equation}
   Clearly  $w \neq (0,\dots, 0)$ (otherwise, $F = N$).   Let $m \in \RR$ be such that $w \cdot a = m$ for all $a \in F$.

   As $F$ is a vertical face, it must contain two points $(a,b)$ and $(a,c)$, where $a \in \ZZ^d$, $b,c \in \ZZ$, $b \neq c$, and $w \cdot (a,b)= w \cdot (a,c) = m$. Thus, $w_{d+1}b = w_{d+1}c$, and so $w = (w_1,\dots, w_d, 0)$.
The following fact is well known. For the reader’s convenience, we provide a proof after completing the proof of Lemma \ref{lem:propvert}.

\smallskip
    \begin{Claim}~\label{claim:1}If a point $u$ lies in $F$, then any segment contained in $N$ that contains the point $u$ in its interior must be contained in $F$.
    \end{Claim}

    Suppose now that $(0,\dots,0)$ lies on the vertical face $F$. As $w = (w_1,\dots, w_d,0)$, by~\eqref{eq:face}, $F$ must contain the segment connecting $(0,\dots, 0)$ and $(0,\dots, 0,n)$. Since $N$ is not a vertical segment, there exist points $(a,b) \in N$ such that $a \neq (0,\dots, 0) \in \RR^d$ and $b \in [0,n]$.
    Let $(a,b) \in N$ be such a point.
    By~\eqref{gfeb252}, $(-a,b) \in N$. As $N$ is convex, $N$ contains the segment $P_{a,b} = [(a,b), (-a,b)]$, which contains the point $(0,\dots,0,b)$ in its interior. As $(0,\dots, 0,b) \in F$, by Claim~\ref{claim:1}, one has that $P_{a,b} \subseteq F$ , and so $(a,b) \in F$. As our choice of $(a,b)$ was arbitrary, all the points of $N$ must be contained in $F$, but then $F = N$; contradicting that $F$ is proper. It follows that $(0,\dots,0)$ cannot be a point of $F$. 

    As $(0,\dots, 0) \not\in F$ and $(0,\dots,0)\in N$ (by \eqref{eq:constant}),  we conclude that
    $$m =  \min_{b \in N} w \cdot b < w \cdot (0,\dots, 0) = 0.$$
\end{proof}
\begin{proof}[\bf Proof of Claim 1]
         Suppose that $F$ is a face of a polytope $N$. If $u \in F$ and $u$ is also in the interior of a segment $[p_1,p_2] \subset N$,  our goal is to prove that $[p_1,p_2] \subseteq F$.  The segment $[p_1,p_2]$ is given by $s(t) = p_1 (1-t) + p_2 t$, where $t \in [0,1]$, and there exists $t_0 \in (0,1)$ so that $s(t_0) = u$. Let $\omega(t) = w \cdot s(t)$, a linear function of $t$. By~\eqref{eq:face}, $\omega(t)$ achieves its minimum at $t_0$. As $\omega$ is linear in $t$, 
         $\omega(t) =  m$ for all $t \in [0,1]$. Therefore,   $[p_1,p_2] \subseteq F$.
    \end{proof}

\begin{proof}[\bf Proof of Theorem  ~\ref{Lem:PotIndep}]

Suppose that $w \in \ZZ^{d+1}$ identifies a proper vertical face $F$ of $N$. As $F$ is a vertical face, by Lemma~\ref{lem:propvert} we have that $w = (w_1,\dots, w_d, 0)$ and 
\begin{equation}\label{gfeb2619}
    \demph{m :=} \min_{a \in A(D(z,\lambda))} w \cdot a < 0.
\end{equation} 
By  ~\eqref{eq:facial}, ~\eqref{eq:facialsum}, \eqref{eq:perm},  \eqref{defdet} and Theorem \ref{Lem:Inner2}, we have that \begin{equation}~\label{eq:facialexpansion} D_w(z,\lambda) = (\sum_{\sigma \in S_n} sgn(\sigma)   (\sigma D(z,\lambda))_w)_w=(\sum_{\sigma \in S_n} sgn(\sigma) \prod_{j=1}^{n} ((L(z)-\lambda I)_{j,\sigma(j)})_w)_w.\end{equation}

Suppose that $D_w(z,\lambda)$ is dependent on $v_i$ for each $i \in [n]$. Then by \eqref{eq:facialexpansion}, we have that
there exists a collection of, not necessarily distinct, permutations $\sigma_1, \dots, \sigma_n \in S_n$   such that for all  $i\in[n]$, $\sigma_i(i) = i$,
\begin{equation}\label{gfeb2618} 
m = \min_{a \in A(\sigma_i D(z,\lambda))} w \cdot a,
\end{equation}
and
\begin{equation}\label{gmar33}
    [v_i](L(z)_{i,i} - \lambda)_w =1.
\end{equation}

Let $G = (U,T)$ be a weighted multigraph with $2n$ vertices $$U = \{u_1, \dots, u_n, t_1, \dots, t_n\},$$ and edge set \begin{equation}\label{eq:edges} T = \{ (u_j, t_{\sigma_i(j)}) \mid i \in [n], j \in [n] \} .\end{equation}  By \eqref{eq:edges}, $G$ is biparite with independent sets $\{u_1,\dots, u_n\}$ and $\{t_1, \dots, t_n \}$,  and  $G$ is $n$-regular. For each edge between  $u_i$ and $t_{j}$, define its weight as 
\begin{equation}\label{gmar38}
    \omega(u_i,t_j) := \min_{a \in A((L(z) - \lambda I)_{i,j})}  w \cdot a.
\end{equation}

By \eqref{eq:facial} and \eqref{eq:perm}, for each $i$ we have
\begin{equation}\label{gfeb2617}
    \sigma_i D(z,\lambda)_w = \prod_{j=1}^n ((L(z) - \lambda I)_{j,\sigma_i(j)})_w.
\end{equation}

By \eqref{gfeb2618} and \eqref{gfeb2617}, 
\begin{equation}\label{eq:edgesum} m   = \sum_{j=1}^n \min_{a \in A((L(z) - \lambda I)_{j,\sigma_i(j)})} w \cdot a  = \sum_{j=1}^n \omega(u_j,t_{\sigma_i(j)}). 
\end{equation}

By \eqref{eq:edges} and \eqref{eq:edgesum}, one has that \begin{equation}\label{eq:Gedgesum}
    \sum_{(u,t) \in T} \omega(u,t) = \sum_{i = 1}^n \sum_{j=1}^n \omega(u_j,t_{\sigma_i(j)}) = nm.
\end{equation}

Recalling that $\sigma(i) = i$ for each $i \in [n]$, from   \eqref{eq:edges} it follows that $T$ contains the edges $(u_j,t_j)$ in $G$ for each $j \in [n]$. Notice that \[P = \{(u_1,t_1), (u_2,t_2), \dots, (u_n,t_n)\}\] is a perfect matching of $G$. 

By \eqref{gmar33}, one has that
$$(0,\dots,0) \in A((L(z)_{i,i} - \lambda)_w),$$ and  hence $\omega(u_i,t_i) = 0$ for each $i \in [n]$.

After removing the edges of $P$ from $G$, we are left with an $(n-1)$-regular bipartite multigraph $G' = (U,T \smallsetminus P)$. As $P$ only has edges of weight $0$, by \eqref{eq:Gedgesum}, $$\sum_{(u,t) \in (T \smallsetminus P) } \omega(u,t) = nm.$$

As $G'$ is $(n-1)$-regular, by Hall's marriage theorem, we can consecutively remove $n-1$ more perfect matchings $P_1,\dots, P_{n-1}$ from $G'$.
As there are only $n-1$ perfect matchings and the total weight of the edges of $G'$ is $nm$ (noting that $m<0$ by \eqref{gfeb2619}), by the pigeon hole principle, there exists $k\in \{1,2,\dots,n-1\}$ such that 
\begin{equation}\label{gmar31}
    \sum_{(u,t) \in P_k} \omega(u,t) < m.
\end{equation} 

Since $P_k$ is a perfect matching, 
it is easy to see that there exists a  permutation $\tau\in S_n$ such that 
\begin{equation}~\label{eq:pmatch} P_k = \{(u_1, t_{\tau(1)}),\dots, (u_n,t_{\tau(n)})\}. \hfill
\end{equation}

By \eqref{gmar38}, \eqref{gmar31} and \eqref{eq:pmatch}, we have that $$\sum_{j=1}^n \omega(u_j, t_{\tau(j)}) = \min_{a \in A(\tau_k D(z,\lambda))} w\cdot a< m,$$ contradicting Corollary~\ref{Lem:Inner}. 
\end{proof}

\section{The Existence of Vertical Faces, and The Inheritance of Linear Factors}~\label{Sec:LinearFactors}
\begin{Theorem}~\label{Thm:2}
    Let $\Gamma$ be a $\ZZ^d$-periodic graph with no fundamental domain of support $0$.   
    
    If for a generic labeling, $D(z,\lambda)$  has a linear factor in $\lambda$,
    then for a generic labeling, there exists an $i \in W$ such that $D|_{W \smallsetminus i}(z,\lambda)$  and $D(z,\lambda)$ share a linear factor in $\lambda$.

\end{Theorem}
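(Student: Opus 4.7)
The plan is to use the hypothesis to extract a ``$z$-independent content'' $G$ of $D$, then produce a proper vertical face of the generic Newton polytope $N$ via Minkowski-sum structure, apply Theorem~\ref{Lem:PotIndep} to that face to eliminate the dependence on a single potential variable, and finally match $v_i$-coefficients to transfer the linear factor from $D$ to $D|_{W\smallsetminus i}$.

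First I would expand $D(v,e,z,\lambda) = \sum_{a\in\ZZ^d} P_a(v,e,\lambda)\, z^a$ and let $G(v,e,\lambda)$ be a polynomial GCD of $\{P_a\}$ in the UFD $\CC[v,e,\lambda]$; since $G\mid P_a$ for each $a$, we get $D = G\cdot H$ for some $H\in \CC[v,e,z^\pm,\lambda]$. For a generic $(v,e)$, the $\CC$-roots of $G(v,e,\lambda)$ are precisely the $\lambda_0\in\CC$ with $(\lambda-\lambda_0)\mid D$, so the hypothesis becomes $\deg_\lambda G \geq 1$. Moreover, $D(v,e,z,0)=\det L(z)$ has the nonzero generic constant-in-$z$ term $v_1 v_2 \cdots v_n$, so $\lambda=0$ is never such a common root, and $G$ has a nonzero constant term in $\lambda$. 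Hence $N(G)$ lies on the $\lambda$-axis in $\RR^{d+1}$ and is a vertical segment of length $\deg_\lambda G \geq 1$.

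Next I would construct the proper vertical face. Since $N = N(G)+N(H)$ and $N(G)$ sits on the $\lambda$-axis, $N$ is a vertical segment if and only if the projection of $N(H)$ onto $\RR^d$ collapses to a point. Under the no-fundamental-domain-of-support-$0$ hypothesis, Theorem~\ref{Lem:GenPolytope} rules this out, so that projection is positive-dimensional. Choosing $w'\in\ZZ^d\setminus\{0\}$ that identifies a proper face of this projection and setting $w:=(w',0)\in\ZZ^{d+1}$, the face $N_w = N(G)+N(H)_w$ is a proper face of $N$ containing translates of $N(G)$, hence a proper vertical face. Theorem~\ref{Lem:PotIndep} then supplies an index $i\in W$ such that $D_w$ is independent of $v_i$.

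Finally I would show that $G$ is itself independent of $v_i$, which transfers all the linear factors to $D|_{W\smallsetminus i}$. Since $w_{d+1}=0$ and $G$ has no $z$-dependence, every monomial of $G$ has $w$-weight $0$, so $G_w=G$; by~\eqref{eq:facial} we get $D_w = G\cdot H_w$. Writing $G = v_i G_1 + G_0$ with $G_0, G_1$ free of $v_i$, we have $D_w = v_i(G_1 H_w) + G_0 H_w$; since $H_w\neq 0$ and $\CC[v,e,z^\pm,\lambda]$ is an integral domain, independence of $D_w$ from $v_i$ forces $G_1=0$, so $G$ itself is free of $v_i$. Writing also $D = v_i D|_{W\smallsetminus i} + D^{(0)}$ and $H = v_i H_1 + H_0$, matching $v_i$-coefficients in $D=GH$ yields $D|_{W\smallsetminus i} = G H_1$, so $G\mid D|_{W\smallsetminus i}$; any root $\lambda_0\in\CC$ of $G$ (existing for generic $(v,e)$ since $\deg_\lambda G\geq 1$) then furnishes a linear factor $(\lambda-\lambda_0)$ common to $D$ and $D|_{W\smallsetminus i}$. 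The main technical hurdle is the construction of the proper vertical face, where Theorem~\ref{Lem:GenPolytope} is essential to exclude the degenerate vertical-segment case; thereafter Theorem~\ref{Lem:PotIndep} together with the integral-domain/UFD structure of $\CC[v,e,z^\pm,\lambda]$ forces the conclusion essentially mechanically.
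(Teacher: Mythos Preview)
Your argument is correct and follows the same skeleton as the paper—use Theorem~\ref{Lem:GenPolytope} to exclude the vertical-segment case, manufacture a proper vertical face by Minkowski-sum/projection, apply Theorem~\ref{Lem:PotIndep}, then transfer the linear factor to $D|_{W\smallsetminus i}$—but the implementation of the first and last steps is genuinely different. The paper (via Lemma~\ref{Lem:VertFace}) peels off a single linear factor $(\lambda-\lambda_0)$ and works with the cofactor $g$; to transfer the factor it fixes all variables except $v_1$, observes that some root of the $v_1$-free facial polynomial $p(\lambda)$ is a flat band for two distinct values $v_1=\pm\omega$, and subtracts. You instead package all flat bands into the polynomial GCD $G\in\CC[v,e,\lambda]$ of the $z$-coefficients, prove $G$ itself is $v_i$-free, and read off $G\mid D|_{W\smallsetminus i}$ by comparing $v_i$-coefficients in $D=GH$. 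Your route is cleaner and yields the stronger conclusion that \emph{every} generic flat band of $D$ is a flat band of $D|_{W\smallsetminus i}$; the paper's route is more elementary in that it avoids the UFD machinery.

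Two small points to tighten. First, the assertion that ``the hypothesis becomes $\deg_\lambda G\ge 1$'' needs a sentence of justification: a priori the flat-band value $\lambda_0(v,e)$ is only algebraic over $\CC(v,e)$, and one must argue (e.g.\ via an irreducible component of $\{P_a=0\ \forall a\}$ dominating $\CC^{V,E}$, or via resultants and Gauss's lemma) that the $P_a$ then share a factor in $\CC[v,e,\lambda]$ of positive $\lambda$-degree. Second, your displayed identity $D_w=v_i(G_1H_w)+G_0H_w$ tacitly assumes $H_w$ is $v_i$-free, which is not yet known; the clean fix is to note $\deg_{v_i}D=1$, so $\deg_{v_i}G+\deg_{v_i}H_w=\deg_{v_i}D_w=0$ in the integral domain $\CC[v,e,z^{\pm},\lambda]$, whence $\deg_{v_i}G=0$ directly.
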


\begin{Lemma}~\label{Lem:VertFace}
Let $\Gamma$ be a $\ZZ^d$-periodic graph with no fundamental domain of support $0$.

If for a generic labeling, $D(z,\lambda)$ has a linear factor in $\lambda$, then $N$ must have a proper face given by a vertical segment. 
In particular, there exists $w\in\ZZ^{d+1}$ with the last coordinate 0 such that 
 $D_{w}(z,\lambda) =  z^a p(\lambda)  $  with $a$ nonzero and $p(\lambda)$ nonzero, and 
 all linear factors of $D(z,\lambda)$ divide $p(\lambda)$.
\end{Lemma}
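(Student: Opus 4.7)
My plan is to build the desired vector $w \in \ZZ^{d+1}$ directly from the geometry of $N$, and then read off the divisibility statement via the multiplicativity of facial polynomials in~\eqref{eq:facial}. First, by Theorem~\ref{Lem:GenPolytope} and the hypothesis on $\Gamma$, the generic Newton polytope $N$ is not a vertical segment, so its projection $\pi(N) \subset \RR^d$ (forgetting the $\lambda$-coordinate) contains more than one point. The symmetry~\eqref{gfeb252} makes $N$, and hence $\pi(N)$, centrally symmetric about the origin, while~\eqref{eq:constant} places $(0,\dots,0)$ in $\pi(N)$. Together these force $0$ into the relative interior of $\pi(N)$, so every vertex of $\pi(N)$ is automatically nonzero. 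I then pick a vertex $a^* \in \ZZ^d \setminus \{0\}$ of $\pi(N)$, a supporting vector $w_0 \in \ZZ^d$ with $\pi(N)_{w_0} = \{a^*\}$, and set $w := (w_0, 0)$.

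Because $w \cdot (a,b) = w_0 \cdot a$ depends only on the first $d$ coordinates, the face $N_w$ is exactly $\pi^{-1}(a^*) \cap N$, i.e.\ it sits entirely above the single point $a^*$. For a generic labeling $\N(D) = N$, so every monomial of $D_w$ has $z$-exponent $a^*$, forcing $D_w = z^{a^*} p(\lambda)$ for some nonzero $p \in \CC[\lambda]$. Properness of the face follows from $\pi(N) \neq \{a^*\}$, and $a = a^* \neq 0$ by construction, giving the part of the conclusion about the shape of $D_w$.

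For the divisibility claim, I first rule out the possibility $\lambda_0 = 0$: by~\eqref{eq:constant} the monomial $v_1 \cdots v_n$ appears in the coefficient of $z^0 \lambda^0$ in $D$, so $D(v,e,z,0)$ is generically a nonzero Laurent polynomial in $z$; equivalently, $\lambda$ itself is never a factor of $D$. Hence every linear factor of $D$ has the form $\lambda - \lambda_0$ with $\lambda_0 \neq 0$. Writing $D = (\lambda - \lambda_0) q$ and noting that both monomials of $\lambda - \lambda_0$ have $w$-weight $0$ (so $(\lambda - \lambda_0)_w = \lambda - \lambda_0$),~\eqref{eq:facial} yields $q_w = z^{a^*} p(\lambda)/(\lambda - \lambda_0)$; since $q_w$ must be a polynomial, $(\lambda - \lambda_0) \mid p$. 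Because $\lambda_0 \neq 0$, this forces $p$ to have at least two terms, upgrading $N_w$ from a possible single point to a genuine one-dimensional vertical segment and completing both assertions. The main subtlety I anticipate is not a single hard calculation, but the coordinated use of two global properties of $D$: the central symmetry~\eqref{gfeb252} (which hands us a nonzero vertex of $\pi(N)$ and hence a \emph{proper} vertical face) and the generic nonvanishing of $\prod v_i$ (which rules out $\lambda_0 = 0$ and hence promotes the face from a point to a genuine segment); once both are in hand, the divisibility of $p$ by each linear factor of $D$ is a one-line consequence of~\eqref{eq:facial}.
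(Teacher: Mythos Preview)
Your proof is correct and follows essentially the same route as the paper's: project the generic Newton polytope to $\RR^d$, pick a nonzero vertex $a$, take $w=(w_0,0)$ exposing that vertex, and read off the divisibility via~\eqref{eq:facial}; the paper reaches the same $w$ by first factoring $D=(\lambda-\lambda_0)g$ and specializing $\lambda$ generically in $g$, whereas you work directly with $\pi(N)$ and invoke the symmetry of $D$ to guarantee $a\neq 0$---a slightly cleaner packaging of the same idea. One small wording fix: $N\subset\RR^{d+1}$ is not centrally symmetric about the origin (look at the $\lambda$-direction); the symmetry coming from~\eqref{gfeb252} is $(a,b)\mapsto(-a,b)$, which is exactly what makes $\pi(N)\subset\RR^d$ centrally symmetric, and that is all you actually use.
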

\begin{proof}
Fix a generic labeling. As $D(z,\lambda)$ has a linear factor, we have that there exists some constant $\lambda_0$ such that $D(z,\lambda) = (\lambda -\lambda_0)g(z,\lambda),$ where $N(g(z,\lambda))$ is not a vertical segment by Theorem~\ref{Lem:GenPolytope}. It follows that $N = \newt{\lambda-\lambda_0} + N(g(z,\lambda))$ for a nonzero $\lambda_0$ (if $\lambda_0$ is $0$, this would contradict \eqref{eq:constant}). If we fix a generic value $\lambda_1 \in \CC$, then we can consider the polytope $N(g(z,\lambda_1)) \subset \RR^d$.  As $N(g(z,\lambda))$ is not a vertical segment, there exists a nonzero vertex of $N(g(z,\lambda_1))$. Fix $a$ to be such a vertex. As $a$ is a vertex (i.e., a $0$-dimensional face), there exists a vector $w' = (w'_1,\dots, w'_d) \in \ZZ^d$ such that \[ \{a \} = \{ c \in N(g(z,\lambda_1)) \mid c \cdot w' = \min_{c' \in \newt{g(z,\lambda_1)}} c' \cdot w'\} \ \ \  (= N(g_{w'}(z,\lambda_1))).\] 

Let $w = (w'_1,\dots, w'_d, 0) \in \ZZ^{d+1}$. 
Noting that $g_{w'}(z,\lambda_1)  = s z^a$ for some constant $s$, it follows that $g_{w}(z,\lambda) = z^a p(\lambda)$ for some nonzero univariate polynomial $p(\lambda)$. 

As the last coordinate of $w$ is $0$, $(\lambda-\lambda_0)_{w} = \lambda -\lambda_0$. By ~\eqref{eq:facial}, we conclude that $$D_{w}(z,\lambda) = z^a p(\lambda) (\lambda-\lambda_0)$$ with $a$ nonzero, and so $N(D_{w}(z,\lambda))$ is a vertical segment and a proper subset of $N$.  
\end{proof}

\begin{proof}[\bf Proof of Theorem \ref{Thm:2}]
 By Lemma~\ref{Lem:VertFace}, 
    there exists a proper face $F$ of $N(D(z,\lambda))$  identified by  a nonzero vector $w = (w_1,\dots,w_d, 0) \in \ZZ^{d+1}$ (that is, $F = N(D_w(z,\lambda))$) such that $D_w(z,\lambda)=z^{a} p(\lambda)$ and all linear factors of  $D(z,\lambda)$ divide $p(\lambda)$.
    By Theorem~\ref{Lem:PotIndep}, up to relabeling the vertices, we can assume that $D_w(z,\lambda)$ is independent of $v_1$ (that is, $[v_1] D_w(z,\lambda) = 0$).

  Note that $p(\lambda)$ has finitely many linear factors.
 After fixing a generic choice of $v_2, \dots, v_n$ and a generic edge labeling, it follows that for a generic choice of $v_1$ (i.e., all but finitely many values of $v_1$), at least one linear factor of $p(\lambda)$ (denote it by $\lambda-\lambda_0$) must divide $D(z, \lambda)$. Therefore, there exists an $\omega \in \CC$ such that $(\lambda-\lambda_0) | D(z,\lambda)$ for $v_1 = \pm \omega$.
  As \begin{equation} D(z,\lambda) = v_1 \det(L|_U(z) - \lambda I) + o.t,\end{equation} where $o.t$ is a polynomial independent of $v_1$. By summing $D(z,\lambda)$ when $v_1 = \omega$  and $v_1 = -\omega$, we obtain that $2 o.t$. It follows that $(\lambda-\lambda_0)$ must divide $o.t$, and therefore it must also divide $\det(L|_U(z) - \lambda I)$. 
This implies that for a generic  labeling, $D|_U(z,\lambda)$
 and $D(z,\lambda)$ have a common linear factor in $\lambda$.
 \end{proof}

\section{On the Resultant   of Graphs with only cut edges}~\label{Sec:Resultant}
We now introduce assumptions and notation for Theorem~\ref{Lem:CutGraph}.

Given two polynomials $f(\lambda) = \sum_{i=0}^s a_i \lambda^i$ and $g(\lambda)=\sum_{i=0}^t b_i \lambda^i$ of degree $s$ and $t$ respectively, the \demph{Sylvester matrix} is the $(s+t) \times (s+t)$ matrix
\[{\small \begin{pmatrix}
    a_0 & a_1 & a_2 & \dots & a_{s-1} & a_s & 0 & 0 & \dots &0 \\
    0 & a_0 & a_1 & \dots & a_{s-2} & a_{s-1} & a_s & 0 & \dots & 0 \\
    \vdots & \vdots & \ddots & \ddots & \ddots& \ddots& \ddots& \ddots & \ddots & \vdots \\
    0 & 0 & 0 & \dots & a_{0} & a_{1} & a_2 & a_3 & \dots & a_s \\
    b_0 & b_1 & b_2 & \dots & b_{t-1} & b_t & 0 & 0 & \dots &0 \\
    0 & b_0 & b_1 & \dots & b_{t-2} & b_{t-1} & b_t & 0 & \dots & 0 \\
    \vdots & \vdots & \ddots & \ddots & \ddots& \ddots& \ddots& \ddots & \ddots & \vdots \\
    0 & 0 & 0 & \dots & b_{0} & b_{1} & b_2 & b_3 & \dots & b_t \\
\end{pmatrix}},\]
where there are $t$ rows involving $a$'s and $s$ rows involving $b$'s. The determinant of the Sylvestor matrix is the (classical) \demph{resultant}, denoted $\demph{Res(f,g)}$. The resultant $Res(f,g)$ is a polynomial in the coefficients of $f$ and $g$ such that $Res(f,g) = 0$ if and only if $f$ and $g$ share a common factor.  

Suppose we have a $\ZZ^d$-periodic graph $\Gamma$ with fundamental domain $W = [n]$, $n>1$, such that there is a $U \subset W$, where $|U|=|W|-1$ and $U$ has support $0$. Furthermore, assume that the simplified quotient graph $H = S(\Gamma / \ZZ^d)$ of $\Gamma$ is connected and has only cut edges, that is, removing any edge from $H$ will disconnect $H$.  

\begin{Theorem}\label{Lem:CutGraph} Let $\Gamma$, $U$, $W$, and $H$ be as above. If all edge labels are nonzero, then $Res(D(z,\lambda)$, $D|_U(z,\lambda)) \neq 0$ for a generic choice of potential. 
\end{Theorem}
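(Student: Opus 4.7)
My strategy is to exploit the tree structure of $H$---which follows from $H$ being connected with every edge a cut edge---via a Schur-complement expansion of $D(z,\lambda)$, and then induct on $n=|W|$.

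Let $w_0$ denote the unique vertex of $W\setminus U$, and root $H$ at $w_0$. Deleting $w_0$ partitions $U$ into the vertex sets $U_1,\dots,U_k$ of the subtrees hanging off $w_0$, with $j_i\in U_i$ the neighbor of $w_0$ in $U_i$. There are no edges of $\Gamma$ between $U_i$ and $U_{i'}$ for $i\neq i'$: any such edge would give an edge of $H$ avoiding $w_0$, contradicting that $H$ is a tree. Combined with $U$ having support $0$, this makes $L|_U$ constant in $z$ and block-diagonal with blocks $L|_{U_i}$, so $D|_U(\lambda)=\prod_{i=1}^{k}D|_{U_i}(\lambda)$. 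Placing $w_0$ first and noting that the off-block rows/columns of $L(z)$ are supported only on $\{j_1,\dots,j_k\}$, the Schur complement gives
\[
D(z,\lambda)=(L(z)_{w_0,w_0}-\lambda)\,D|_U(\lambda)-\sum_{i=1}^{k}L(z)_{w_0,j_i}L(z)_{j_i,w_0}\,D|_{U_i\setminus j_i}(\lambda)\prod_{i'\neq i}D|_{U_{i'}}(\lambda).
\]
Since $D|_U(\lambda)$ is $z$-independent with $\lambda$-leading coefficient $\pm 1$, $\text{Res}_\lambda(D(z,\lambda),D|_U(\lambda))\not\equiv 0$ in $z$ is equivalent to the condition that $D(z,\mu)\not\equiv 0$ in $z$ for every root $\mu$ of $D|_U(\lambda)$; this is what I will establish for generic potentials.

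For generic potentials the eigenvalues of the different $L|_{U_i}$ are pairwise disjoint (each $D|_{U_i}$ involves a disjoint set of potential variables, and the pairwise resultants are visibly nonzero polynomials in those variables, as is seen by sending the potentials in one block to infinity). Each root $\mu$ of $D|_U$ then lies in exactly one block $L|_{U_{i_0}}$; every summand with $i\neq i_0$ contains the factor $D|_{U_{i_0}}(\mu)=0$ and vanishes, leaving
\[
D(z,\mu)=-L(z)_{w_0,j_{i_0}}L(z)_{j_{i_0},w_0}\,D|_{U_{i_0}\setminus j_{i_0}}(\mu)\prod_{i'\neq i_0}D|_{U_{i'}}(\mu).
\]
The factor $L(z)_{w_0,j_{i_0}}L(z)_{j_{i_0},w_0}$ is a nonzero Laurent polynomial in $z$ because $w_0$ and $j_{i_0}$ are adjacent in $H$, so at least one edge of $\Gamma$ joins their orbits and all edge weights are nonzero; the product over $i'\neq i_0$ is nonzero by the same genericity.

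The main obstacle is the remaining factor $D|_{U_{i_0}\setminus j_{i_0}}(\mu)$: I must show that $D|_{U_{i_0}}(\lambda)$ and $D|_{U_{i_0}\setminus j_{i_0}}(\lambda)$ are coprime for generic potentials. I will handle this by induction on $n=|W|$. The base case $n=2$ forces $|U_{i_0}|=1$, so $U_{i_0}\setminus j_{i_0}=\emptyset$ and the factor equals $1$ (empty determinant). For the inductive step, if $|U_{i_0}|=1$ the conclusion is again immediate; otherwise $|U_{i_0}|\geq 2$, and I apply the inductive hypothesis to the periodic subgraph $\Gamma'=\Gamma_{U_{i_0}}$ with $W'=U_{i_0}$ and $U'=U_{i_0}\setminus j_{i_0}$. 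All hypotheses carry over: $U_{i_0}$ (hence $U'$) inherits support $0$ from $U$; $H|_{U_{i_0}}$ is a subtree of the tree $H$ and so is connected with only cut edges; and $|W'|\leq n-1<n$. Because $L|_{U_{i_0}}$ is $z$-independent, the inductive conclusion $\text{Res}_\lambda(D'(z,\lambda),D'|_{U'}(z,\lambda))\neq 0$ is exactly the coprimality of $D|_{U_{i_0}}(\lambda)$ and $D|_{U_{i_0}\setminus j_{i_0}}(\lambda)$, closing the induction.
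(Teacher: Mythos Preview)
Your proof is correct and follows essentially the same approach as the paper's: both exploit the tree structure of $H$ to write a block expansion of $D(z,\lambda)$ (your Schur-complement formula is exactly the paper's equation for $P(z,\lambda)$), reduce modulo each branch $D|_{U_i}(\lambda)$ using the generic disjointness of eigenvalues across branches, and then invoke the induction hypothesis on the subtree $U_{i_0}$ to obtain coprimality of $D|_{U_{i_0}}$ and $D|_{U_{i_0}\setminus j_{i_0}}$. The only cosmetic difference is that you substitute roots $\mu$ via the product formula for the resultant, whereas the paper phrases the same step as ``$P$ and $D|_U$ share a factor iff some $\mathrm{Res}(D|_{\calV(J_j)},\,L_{1,i_j}L_{i_j,1}D|_{\calV(J_j)\setminus\{i_j\}})$ vanishes.''
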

\begin{proof}
As $U$ has support $0$, we may write $D|_U(z,\lambda)$ as  $D|_U(\lambda)$. Therefore, $Res(D(z,\lambda)$, $D|_U(\lambda)) \neq 0$ if and only if 
$D(z,\lambda)$ and  $D|_U(\lambda)$ do not share common linear factor in $\lambda$.

Without loss of generality, we  assume that $W \smallsetminus U = \{1 \}$. 

 Write $ D(z,\lambda) $ as
 \begin{equation}
     D(z,\lambda) = (L(z)_{1,1} - \lambda) D|_U(\lambda) + P(z,\lambda).
 \end{equation} 

As $D|_U(\lambda)$ clearly divides $(L(z)_{1,1} - \lambda) D|_U(\lambda)$, we have that \begin{equation} Res(D(z,\lambda), D|_U(\lambda)) \neq 0\text{ if and only if }Res(P(z,\lambda), D|_U(\lambda)) \neq 0.\end{equation} We proceed by induction on the number vertices in $W$.

If $n = 2$, then $D|_U(\lambda)$ is a degree one polynomial in $\lambda$ and $$P(z,\lambda) = -L(z)_{1,2} L(z)_{2,1}$$ is a nonzero polynomial (otherwise, the graph would not be connected). It follows trivially that $P(z,\lambda)$ and $ D|_U(\lambda)$ share no factors, and so $Res(P(z,\lambda), D|_U(\lambda)) \neq 0$.

Suppose that  for all $n <k$, Theorem \ref{Lem:CutGraph} holds and consider the case when $n=k$.

Let $J$ be the graph obtained after removing the vertex $1$ from $H$.  This graph has $r$, where $r$ is the degree of vertex $1$ in $H$, connected components, $J_1, J_2,\dots, J_r$, each of which is a graph such that every edge is a cut edge. Moreover,  for each connected component $J_l$, there is a single edge between $1$ and some vertex $i_l$ of $J_l$. 

Up to relabeling the vertices,   we may write $L(z)$ as 

\[{\small L(z) = \begin{pmatrix}
    L(z)_{1,1} & \begin{matrix} L(z)_{1,i_1} & 0 & \dots 0 \end{matrix} & \dots  &\begin{matrix} L(z)_{1,i_r} & 0 & \dots 0 \end{matrix} \\ 
    
    \begin{matrix} L(z)_{i_1,1} \\ 0  \\ \vdots \\ 0 \end{matrix} & L|_{\calV(J_1)}(z) & \begin{matrix} 0\\ 0 \\ \vdots \\ 0 \end{matrix} & \bf{0}  \\
    
    \vdots & \begin{matrix} 0 & 0 & \dots 0 \end{matrix}  & \ddots & \\ 
    
    \begin{matrix} L(z)_{i_r,1} \\ 0  \\ \vdots \\ 0 \end{matrix} & \bf{0}
 & \cdots & L|_{\calV(J_r)}(z) \\
\end{pmatrix}}.\]

Thus, we see that \begin{equation}~\label{eq:8.3} P(z,\lambda) = -\sum_{j=1}^r \left( L(z)_{1,i_j} L(z)_{i_j,1} D|_{\calV(J_j) \smallsetminus \{i_j\}}(\lambda) \prod_{s=1, s \neq j}^r  D|_{\calV(J_s)}(\lambda)\right)\end{equation}
and \begin{equation}~\label{eq:8.4} D|_U(\lambda) =  \prod_{s=1}^r  D|_{\calV(J_s)}(\lambda).\end{equation}

We note that, as the vertex sets of $\calV(J_j)$ and $\calV(J_{j'})$ are disjoint whenever $j \neq j'$, the linear factors of $ D|_{\calV(J_j)}(\lambda) = \det(L|_{\calV(J_j)}(z)-\lambda I)$ and $D|_{\calV(J_{j'})}(\lambda) = \det(L|_{\calV(J_{j'})}(z)-\lambda I)$ do not agree for a generic choice of potential.

By \eqref{eq:8.3} and \eqref{eq:8.4}, $P(z,\lambda)$ and $D|_U(\lambda)$ share a common factor for a generic potential if and only if, for some $j$,
\begin{equation}\label{gfeb281}
    Res( D|_{\calV(J_j)}(\lambda), L(z)_{1,i_j} L(z)_{i_j,1} D|_{\calV(J_j) \smallsetminus \{i_j\}}(\lambda)) = 0.
\end{equation}

Note that for each $j$, $L(z)_{1,i_j} L(z)_{i_j,1}$ is nonzero.

{\bf Case 1}: $J_j$ is a single vertex

In this case, one has that \begin{align} \label{gfeb283}\begin{split} Res( D|_{\calV(J_j)}(\lambda), L(z)_{1,i_j} L(z)_{i_j,1} D|_{\calV(J_j) \smallsetminus \{i_j\}}(\lambda)) \\ = Res( D|_{\calV(J_j)}(\lambda), L(z)_{1,i_j} L(z)_{i_j,1}) \neq 0.\end{split} \end{align}

{\bf Case 2}:  $J_j$ has at least two vertices

Notice that   $J_j$  only has cut edges. As $\calV(J_j) \subset U$, $\Gamma_{\calV(J_j)}$ is a periodic graph with simplified quotient graph $J_j$. Furthermore, $\Gamma_{\calV(J_j)}$ also has a fundamental domain that is of support $0$, and has less than $k$ vertices. Finally, $U' = \calV(J_j) \smallsetminus \{i_j\}$ is a subset of $\calV(J_j)$ with support $0$. It follows from the induction hypothesis that  for a generic potential, 
\begin{equation*} 
    Res( D|_{\calV(J_j)}(\lambda), D|_{\calV(J_j) \smallsetminus \{i_j\}}(\lambda)) \neq 0,
\end{equation*}
and hence
\begin{equation}\label{gfeb282}
    Res( D|_{\calV(J_j)}(\lambda), L(z)_{1,i_j} L(z)_{i_j,1}D|_{\calV(J_j) \smallsetminus \{i_j\}}(\lambda)) \neq 0.
\end{equation}

By \eqref{gfeb281}, \eqref{gfeb283} and \eqref{gfeb282},
we conclude that  when $n=k$, $Res(D(z,\lambda), D|_U(\lambda)) \neq 0.$ By induction, we finish the proof.
\end{proof}

\section{\bf Proof of Theorem~\ref{Thm:Main}}~\label{Sec:Result}
\begin{proof}
The direction that having a component of support $0$ implies the existence of flat bands  follows from Lemma~\ref{fact:componentzero}.

We now prove the remaining direction of Theorem~\ref{Thm:Main}, which states that if, for a generic labeling, the dispersion polynomial  $D(z, \lambda)$  has a linear factor in  $\lambda$, then $\Gamma$ has a fundamental domain     with a nonempty component of support  $0$. We proceed by induction on the number of vertices in the fundamental domain of $\Gamma$.

For $n=1$, it is clear that  if $D(z,\lambda)$   has a linear factor in $\lambda$, then     the fundamental domain of $\Gamma$ has support $0$. 

Assume that Theorem~\ref{Thm:Main} holds for all $n < k$. Suppose that $n=k$. Suppose that $\Gamma$ has no fundamental domain with a support $0$ component and for a generic labeling, $D(z,\lambda)$ has a linear factor in $\lambda$.  Our goal is to obtain a contradiction.  

Let $W$ be a fundamental domain of $\Gamma$. Applying Theorem~\ref{Thm:2} and possibly relabeling the vertices of $W$, we can assume that $U = W \smallsetminus \{ 1\}$ is such that $D|_U(z,\lambda)$ has a linear factor for a generic labeling. 

By the induction hypothesis, as $|U| < k$, it follows that there exists a fundamental domain of $\Gamma_U$ with a component of support $0$.  Without loss of generality, assume that $U \subset W$ is a fundamental domain of $\Gamma_U$ with the maximal number of vertices contained in components of support $0$ among all fundamental domain of $\Gamma_U$.

Let $\hat{U}$ be the collection of all support $0$ components of $U$. Reordering vertices, we can assume $\hat{U} = \{ 2,\dots, l \}$ for some $l \leq k$.
 
As $\hat{U}$ is made up of components of support $0$, $D|_{\hat{U}}(z,\lambda) = D|_{\hat{U}}(\lambda)$ and \begin{equation} D|_U(z,\lambda) = D|_{\hat{U}}(\lambda)D|_{U\smallsetminus \hat{U}}(z,\lambda).\end{equation}

Under our notation, $\Gamma_{U\smallsetminus \hat{U}} $ has no component with support 0. 
By the induction hypothesis, $D|_{U\smallsetminus \hat{U}}(z,\lambda)$ has no linear factors in $\lambda$.

Notice that $L(z)$ is the matrix
\begin{equation} \label{eq:DecompMatrix} {\tiny \begin{pmatrix}
    L(z)_{1,1} & \begin{matrix} L(z)_{1,2} & L(z)_{1,3} & \dots L(z)_{1,l} \end{matrix} &\begin{matrix} L(z)_{1,l+1} & L(z)_{1,l+2} & \dots L(z)_{1,k} \end{matrix} \\ 
    
    \begin{matrix} L(z)_{2,1} \\ L(z)_{3,1}  \\ \vdots \\ L(z)_{l,1} \end{matrix} & L|_{\hat{U}}(z) & \bf{0}  \\
    
    \begin{matrix} L(z)_{l+1,1} \\ L(z)_{l+2,1}  \\ \vdots \\ L(z)_{k,1} \end{matrix} & \bf{0}
 & L|_{U \smallsetminus \hat{U}}(z) \\
\end{pmatrix}}.
\end{equation}

By~\eqref{eq:DecompMatrix}, we may expand $D(z,\lambda)$ as follows:

\begin{align} \begin{split}\label{eq:Decomp} D(z,\lambda) =& \  (L(z)_{1,1} - \lambda) D|_{\hat{U}}(\lambda)D|_{U\smallsetminus \hat{U}}(z,\lambda) \\
&+  D|_{U\smallsetminus \hat{U}}(z,\lambda) P(z,\lambda) 
+  D|_{\hat{U}}(\lambda) Q(z,\lambda) .
    \end{split}
\end{align}

Recall that $D|_{U\smallsetminus \hat{U}}(z,\lambda)$ has no linear factors in $\lambda$ and that $D|_{\hat{U}}(\lambda)$ is strictly a product of linear factors in $\lambda$.
By Theorem~\ref{Thm:2}, $D|_U(z,\lambda)=D|_{\hat{U}}(\lambda)D|_{U\smallsetminus \hat{U}}(z,\lambda)$ must have the same linear factor as $D(z,\lambda)$ for a generic labeling, and so, by~\eqref{eq:Decomp}, 
$P(z,\lambda)$  and  $D|_{\hat{U}}(\lambda)$ share a common factor that is linear in $\lambda$.

Since $P(z,\lambda)$ and $D|_{\hat{U}}(\lambda)$ share a factor, the resultant of these two polynomials must vanish for all $z \in (\CC^*)^d$. This implies that for all the labeling variables $e_{(i,j),a}$ and $v_i$, 
\begin{equation}\label{gfeb271}
    Res(P(z,\lambda), D|_{\hat{U}}(\lambda)) = 0.
\end{equation}

Consider the subgraph $\Gamma_{\{1\} \cup \hat{U}}$. Clearly, by \eqref{eq:Decomp}, we have that
\begin{equation}\label{gfeb2710}
D|_{ \{1\} \cup \hat{U}}(z,\lambda) = L(z)_{1,1} D|_{\hat{U}}(\lambda)
+  P(z,\lambda) .
\end{equation}
Let $H = S(\Gamma_{\{1\} \cup \hat{U}} / \ZZ^d)$.

{\bf Case 1: }$H$ is not connected.
In this case, $\Gamma_{\{1\}}$ must be disconnected with one of the components of $\Gamma_{ \hat{U}}$. This immediately implies that $\Gamma$ has a component of support $0$. Since, by our assumption,  $\Gamma$ has no fundamental domain with a support $0$ component,  we arrive at a contradiction.
  
  {\bf Case 2: }$H$ is connected.
In this case, we first note that $H$ must contain an $l$-vertex subgraph $H'$ with only cut edges (we obtain $H'$ by removing some edges from $H$). 
For some $i \neq j$, 
deleting all edges of the form $(i+b, j+c)$ in $\Gamma_{\{1\} \cup \hat{U}}$, for all $b,c \in \ZZ^d$,   is equivalent to
setting $e_{(i,j),a}$ to $0$ for all $a \in \ZZ^d$,    and is also equivalent to deleting the edge $(i,j)$ in $H$. 
By setting enough \( e_{(i,j),a} \) (for all $a\in \ZZ^d$) to 0, \( \Gamma_{\{1\} \cup \hat{U}} \) reduces to a subgraph \( \Gamma' \), and \( H \) reduces to a subgraph \( H' \), where \( H' \) is a connected graph with  only   cut edges. Let $L'(z)$ be the Floquet matrix of the corresponding   periodic operator on $\Gamma'$. 

Under this specialization of the edge labels, by \eqref{gfeb271} and \eqref{gfeb2710}, we have that  
\begin{equation}~\label{eq:equivRes}
     Res(\det(L'(z)-I\lambda),\det(L'|_{\hat{U}}(z)-I\lambda))=0.
\end{equation}

 As $\hat{U} \subset \{1\} \cup \hat{U}$ has support $0$ and $H'$ has only cut edges, by Theorem~\ref{Lem:CutGraph},  for a generic potential,
 \begin{equation}~\label{eq:final} Res(\det(L'(z)-I\lambda),\det(L'|_{\hat{U}}(z)-I\lambda)) \neq 0.\end{equation}

By   \eqref{eq:equivRes}  and \eqref{eq:final},  we have a contradiction. 

Since we obtain a contradiction in both cases, it follows that  when $n=k$, if, 
for a generic labeling, the dispersion polynomial  $D(z, \lambda)$  has a linear factor in  $\lambda$, then $\Gamma$ has a fundamental domain     with a nonempty component of support  $0$. By induction, we finish the proof.
\end{proof}

\section*{Acknowledgements}
W. Liu extends gratitude to  Mostafa Sabri for his valuable comments on an earlier version of our preprint regarding the historical study of flat bands.
M. Faust thanks Frank Sottile for many helpful conversations on resultants.
W. Liu was a 2024-2025 Simons fellow. This research was partially supported by NSF grants  DMS-2201005, DMS-2052572, DMS-2246031, and DMS-2052519.

\section*{Statements and Declarations}
{\bf Conflict of Interest} 
The authors declare no conflicts of interest.

\vspace{0.2in}
{\bf Data Availability}
Data sharing is not applicable to this article as no new data were created or analyzed in this study.

\def\cprime{$'$}

\bibliographystyle{amsplain}
\bibliography{main}

\end{document}